\theoremstyle{plain}
\newtheorem{thm}{Theorem}[section]
\newtheorem{prop}[thm]{Proposition}
\newtheorem{lem}[thm]{Lemma}
\newtheorem{cor}[thm]{Corollary}
\newtheorem{defn}[thm]{Definition}
\newtheorem{exmp}[thm]{Example}
\newtheorem{notation}[thm]{Notation}
\newcommand{\R}{\mathbb{R}}
\newcommand{\Z}{\mathbb{Z}}
\newcommand{\N}{\mathbb{N}}
\newcommand{\dom}{\text{dom}\;} 
\newcommand{\diag}{\text{diag}} 
\newcommand{\norm}[1]{\left\lVert#1\right\rVert} 
\newcommand{\abs}[1]{\left\lvert #1 \right\rvert} 
\newcommand{\lp}{\left(}
\newcommand{\rp}{\right)}
\newcommand{\lan}{\left\langle}
\newcommand{\ran}{\right\rangle}
\begin{document}
\title{Kernel Stabilization of Unbounded Derivations on $C^*$-algebras}
\author{Lara Ismert}
\address{Department of Mathematics, University of Nebraska-Lincoln, 1400 R Street, Lincoln, NE 68588}
\curraddr{}
\email{lara.ismert@huskers.unl.edu}

  \begin{abstract}
\noindent A derivation $\delta$ on a $C^*$-algebra has \textit{kernel stabilization} if for all $n\in \N$, $\ker \delta^n=\ker \delta.$ Our main result shows that a weakly-defined derivation studied recently by E. Christensen has kernel stabilization. As corollaries, we (1) show that a family of $*$-derivations on $C^*$-algebras studied by Bratteli and Robinson has kernel stabilization and (2)  provide sufficient conditions for when operators satisfying the Heisenberg Commutation Relation must both be unbounded. 
\end{abstract}
\maketitle
\vspace{-1cm}
\section{\label{sec:level1}Introduction}

\indent Given an algebra $\mathscr{A}$ with involution and a fixed element $a\in \mathscr{A}$ such that $a=a^*$, the map $\delta_a:\mathscr{A}\to \mathscr{A}$ by $\delta_a(b):=[ia,b]$ (where $[x,y]=xy-yx$) is a $*$-derivation, that is, $\delta_a(b^*)=\delta_a(b)^*$ for all $b\in \mathscr{A}$. Conversely, for an arbitrary $*$-derivation $\delta:\mathscr{A}\to \mathscr{A}$, certain conditions on the algebra can imply $\delta=\delta_a$ for some $a\in \mathscr{A}.$ The correspondence between derivations on algebras and their representation as commutators has a rich history and is deeply connected to the mathematical formulation of quantum mechanics.

We focus on two settings. The first is when $\mathscr{A}=B(H)$, the set of bounded linear operators on a Hilbert space $H$, and we examine a $*$-derivation on $B(H)$ defined by commutation with an element that is affiliated to a subalgebra of $B(H)$. Specifically, we consider commutators of elements of $B(H)$ with a fixed (possibly unbounded) self-adjoint operator $D$. In the second setting, we consider a $C^*$-algebra $\mathscr{A}$ with a $*$-derivation $\delta$, where the domain of $\delta$ is potentially a proper subspace of $\mathscr{A}$. Under certain conditions, as in Theorem 4 of \cite{Bratteli-Robinson}, $\delta$ and $\mathscr{A}$ may be faithfully represented as commutation with a self-adjoint element, thus returning to the first setting.

The domain of the $*$-derivation in both of these cases is potentially a proper subspace of the algebra. This creates complexities that are not found with derivations defined on the entire $C^*$-algebra. In \cite{Kadison}, Kadison summarizes three of the many significant results pertaining to bounded derivations, which we list below:

\begin{enumerate}
\item Every such derivation on a commutative $C^*$-algebra is 0. (This follows from the Singer-Wermer Theorem from 1955 in \cite{Singer}.)

\item Sakai (1959) showed in \cite{Sakai1} that every derivation on a $C^*$-algebra is automatically bounded, thus affirmatively settling a 1953 conjecture of Kaplansky.

\item In \cite{Kaplansky}, Kaplansky showed every bounded derivation $\delta$ of a type $I$ von Neumann algebra $M$ is \textit{inner}, i.e., there exists $a\in M$ such that $\delta=\delta_a$. 
\end{enumerate}

We turn our attention to densely defined derivations on $C^*$-algebras. Bratteli and Robinson show in \cite{Bratteli-Robinson} that a certain class of unbounded $*$-derivations on $C^*$-algebras can be represented by commutation with an essentially self-adjoint operator $S$. Much more recently, Kadison and Z. Liu have studied unbounded analogues of the aforementioned theorems using Murray-von Neumann algebras in \cite{Kadison-Liu} .

 Let $D$ be an unbounded self-adjoint operator on $H$. Seeking to formalize the connection between commutators and unbounded derivations on $B(H)$ of the form $\delta_D$, Christensen showed in \cite{Christensen1} that $x\in B(H)$ makes $[D, x]$ defined and bounded on a core for $D$ if and only if for every $h,k\in H$, the map $t\mapsto \lan e^{itD}xe^{-itD}h,k\ran$ is continuously differentiable. If $x$ satisfies this, we say $x$ is weakly $D$-differentiable, denoted $x\in \dom \delta^D_w,$ and we define $\delta^D_w(x)$ to be the bounded extension of $[iD,x]$ to all of $H$. Our main result, Theorem\autoref{thm:kerD}, states $\delta^D_w$ has \textit{kernel stabilization}, that is, for every $n\in \N$, $$\ker (\delta^D_w)^n=\ker \delta^D_w.$$
We give two applications of our main result. The first application extends the property of kernel stabilization to a class of unbounded $*$-derivations on $C^*$-algebras considered by Bratteli and Robinson in \cite{Bratteli-Robinson}. This class of derivations is described in the following theorem.

\begin{thm}[Bratteli-Robinson, Theorem 4 \cite{Bratteli-Robinson}]\label{B-R} Let $\delta$ be a derivation of a $C^*$-algebra $\mathscr{A},$ and assume there exists a state $\omega$ on $\mathscr{A}$ which generates a faithful cyclic representation $(\pi, H,f)$ satisfying $$\omega(\delta(a))=0,\;\;\;\forall a\in \dom \delta.$$
Then $\delta$ is closable and there exists a symmetric operator $S$ on $H$ such that $$\dom S=\{h\in H:h=\pi(a)f\;\;\text{ for some }a\in \mathscr{A}\}$$ and $\pi(\delta(a))h=[S,\pi(a)]h,$ for all $a\in \dom \delta$ and all $h\in \dom S.$ Moreover, if the set $\mathscr{A}^\infty$ of analytic vectors for $\delta$ is dense in $\mathscr{A}$, then $S$ is essentially self-adjoint on $\dom S.$ For $x\in B(H)$ and $t\in \R$, define $$\alpha_t(x):=e^{i\overline{S}t}xe^{-i\overline{S}t}$$ where $\overline{S}$ denotes the self-adjoint closure of $S.$ It follows that $\alpha_t(\mathscr{A})=\mathscr{A}$ for all $t\in \R,$ and $\{\alpha_t\}_{t\in \R}$ is a strongly continuous group of automorphisms with closed infinitesimal generator $\widetilde{\delta}$ equaling the closure of $\pi\circ \delta|_{\mathscr{A}^\infty}.$
\end{thm}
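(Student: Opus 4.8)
The plan is to carry out the construction inside the GNS representation $(\pi,H,f)$ of $\omega$. Write $\mathscr{D}:=\{\pi(a)f:a\in\dom\delta\}$; since $\dom\delta$ is dense and $f$ is cyclic, $\mathscr{D}$ is dense in $H$. Define an operator $S$ on $\mathscr{D}$ by $S\pi(a)f:=\pi(\delta(a))f$ — the only prescription compatible with the target identity $\pi(\delta(a))=[S,\pi(a)]$ and $Sf=0$. The first step is \emph{well-definedness}: if $c\in\dom\delta$ with $\pi(c)f=0$ (equivalently $\omega(c^{*}c)=0$), then for every $d\in\dom\delta$ the Leibniz rule applied to $\delta(d^{*}c)$, combined with $\omega\circ\delta=0$ and the behaviour of $\delta$ on adjoints, gives $\lan\pi(\delta(c))f,\pi(d)f\ran=\pm\lan\pi(c)f,\pi(\delta(d))f\ran=0$; density of $\mathscr{D}$ forces $\pi(\delta(c))f=0$. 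The same ingredients, applied this time to $\delta(b^{*}a)$, show $\lan S\pi(a)f,\pi(b)f\ran=\lan\pi(a)f,S\pi(b)f\ran$ for $a,b\in\dom\delta$, i.e. $S$ is symmetric on $\mathscr{D}$ (with the sign convention on $\delta$ under which $[S,\pi(a)]$ is the right form; I suppress this bookkeeping). Being symmetric, $S$ is closable; and a nearly identical pairing argument — for $a_{n}\to0$ in $\mathscr{A}$ with $\delta(a_{n})\to b$, pair $\pi(\delta(a_{n}))f$ against $\pi(d)f$ and use $\omega(d^{*}\delta(a_{n}))=-\omega(\delta(d^{*})a_{n})\to0$ — forces $\pi(b)f=0$, whence $b=0$ by faithfulness of $\omega$. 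This is the closability of $\delta$.

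Next I would identify $\dom S$ and verify the commutation relation. For $a\in\mathscr{A}$ the vector $S\pi(a)f$ is forced to be the Riesz representative on $H$ of the functional $\pi(b)f\mapsto-\omega(a^{*}\delta(b))$ on $\mathscr{D}$, so $\dom S=\{\pi(a)f:a\in\mathscr{A}\}$ amounts to the claim that this functional is $H$-bounded for every $a\in\mathscr{A}$; I would prove this by approximating $a$ within $\dom\delta$ and using $\omega\circ\delta=0$ to rewrite $\omega(a^{*}\delta(b))$ in a form controlled by $\norm{\pi(b)f}$. With this in hand, $\pi(\delta(a))h=[S,\pi(a)]h$ for $a\in\dom\delta$ and $h\in\dom S$ follows by evaluating at $h=\pi(c)f$ ($c\in\mathscr{A}$), expanding the commutator, and using $ac,\,ca\in\mathscr{A}$. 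For essential self-adjointness, assume $\mathscr{A}^{\infty}$ dense. An analytic element $a\in\mathscr{A}^{\infty}$ lies in $\dom\delta^{n}$ for all $n$, so $S^{n}\pi(a)f=\pi(\delta^{n}(a))f$ and $\sum_{n}\tfrac{t^{n}}{n!}\norm{S^{n}\pi(a)f}\le\sum_{n}\tfrac{t^{n}}{n!}\norm{\delta^{n}(a)}<\infty$ for small $t>0$; hence $\{\pi(a)f:a\in\mathscr{A}^{\infty}\}$ is a dense set of analytic vectors for $S$, and Nelson's analytic vector theorem makes $S$ restricted to it essentially self-adjoint. Since a symmetric operator lying between an essentially self-adjoint operator and its closure has the same self-adjoint closure, $S$ on $\dom S$ is essentially self-adjoint; write $\overline{S}$ for its closure.

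For the dynamics, set $\alpha_{t}(x):=e^{i\overline{S}t}xe^{-i\overline{S}t}$: each $\alpha_{t}$ is manifestly a $*$-automorphism of $B(H)$, and $\{\alpha_{t}\}_{t\in\R}$ is a one-parameter group, strongly continuous because the unitary group $\{e^{i\overline{S}t}\}$ is. To see $\alpha_{t}(\mathscr{A})=\mathscr{A}$, fix $a\in\mathscr{A}^{\infty}$; for $|t|$ small the series $b_{t}:=\sum_{n}\tfrac{(it)^{n}}{n!}\delta^{n}(a)$ converges in $\mathscr{A}$, and iterating the commutation relation along analytic vectors (equivalently, uniqueness for the operator-valued equation $F'(t)=i[\overline{S},F(t)]$) gives $e^{i\overline{S}t}\pi(a)e^{-i\overline{S}t}=\pi(b_{t})$; since $\mathscr{A}^{\infty}$ is dense, $\alpha_{t}$ is isometric, and $\pi(\mathscr{A})$ is closed (as $\pi$ is faithful), we get $\alpha_{t}(\mathscr{A})\subseteq\mathscr{A}$, with the reverse inclusion from $\alpha_{-t}$ and all $t\in\R$ reached by iteration. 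The same identity yields $\norm{\alpha_{t}(a)-a}=\norm{\sum_{n\ge1}\tfrac{(it)^{n}}{n!}\delta^{n}(a)}\to0$ on $\mathscr{A}^{\infty}$, hence point-norm continuity of $\{\alpha_{t}\}$ on all of $\mathscr{A}$ by density together with $\norm{\alpha_{t}}=1$; so $\{\alpha_{t}\}$ is a strongly continuous group of automorphisms of $\mathscr{A}$. Differentiating $b_{t}$ at $t=0$ shows $\mathscr{A}^{\infty}$ lies in the domain of the generator $\widetilde{\delta}$ and $\widetilde{\delta}=\pi\circ\delta$ there; since $\mathscr{A}^{\infty}$ is dense, $\{\alpha_{t}\}$-invariant (a power-series estimate gives $b_{t}\in\mathscr{A}^{\infty}$), and contained in $\dom\widetilde{\delta}$, it is a core for the closed generator, so $\widetilde{\delta}=\overline{\pi\circ\delta|_{\mathscr{A}^{\infty}}}$.

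I expect the main obstacle to be domain bookkeeping rather than any single estimate: showing that the prescription for $S$ really extends from $\mathscr{D}$ to all of $\{\pi(a)f:a\in\mathscr{A}\}$ with the commutation relation still valid, and then correctly transferring essential self-adjointness from the analytic-vector core up to $S$ on this larger domain. The identity $e^{i\overline{S}t}\pi(a)e^{-i\overline{S}t}=\pi\!\left(\sum_{n}\tfrac{(it)^{n}}{n!}\delta^{n}(a)\right)$ — interchanging an unbounded exponential with a norm-convergent series — is the other place where one must argue rather than compute, but it is routine once the analytic-vector machinery is in place.
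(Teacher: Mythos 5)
This theorem is quoted from Bratteli--Robinson (Theorem 4 of \cite{Bratteli-Robinson}); the paper gives no proof of it, so there is no internal argument to compare yours against. Your architecture is the standard one — define $S$ on $\pi(\dom\delta)f$ through $\delta$, check well-definedness and symmetry from the Leibniz rule together with $\omega\circ\delta=0$, get essential self-adjointness from Nelson's theorem applied to $\pi(\mathscr{A}^\infty)f$, and recover the dynamics from the exponential series on analytic elements — and most of it is sound. Two steps, however, do not work as written.

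First, in the closability argument you derive $\pi(b)f=0$ and conclude $b=0$ ``by faithfulness of $\omega$.'' Faithfulness of $\omega$ is not a hypothesis; what is assumed is that the GNS representation $\pi$ is faithful, and $\pi(b)f=0$ does not imply $\pi(b)=0$ unless $f$ is separating. The correct pairing is two-sided: expand $0=\omega(\delta(c^*a_nd))$ for $c,d\in\dom\delta$ and let $n\to\infty$ to get $\omega(c^*bd)=0$ for all $c,d$, hence $\lan \pi(b)\pi(d)f,\pi(c)f\ran=0$ on a total set, hence $\pi(b)=0$, and only then $b=0$ by faithfulness of $\pi$. Second, the step you yourself flag as the main obstacle — extending $S$ from $\pi(\dom\delta)f$ to all of $\pi(\mathscr{A})f$ — is a genuine gap: the functional $\pi(b)f\mapsto\omega(a^*\delta(b))$ has no reason to be bounded for general $a\in\mathscr{A}$, and approximating $a$ by elements of $\dom\delta$ gives no control because $\delta$ is unbounded. (A concrete obstruction: for $\mathscr{A}=C(\mathbb{T})$, $\delta=d/d\theta$, and $\omega$ the trace, the closure of $S$ is $-i\,d/d\theta$ on $H^1(\mathbb{T})$, which does not contain all continuous functions.) No such extension is claimed in the original theorem, whose domain is $\dom S=\{\pi(a)f: a\in\dom\delta\}$; the ``$a\in\mathscr{A}$'' in the statement as reproduced here appears to be a slip. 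On the correct domain the commutator identity follows directly from the Leibniz rule since $\dom\delta$ is an algebra, and the rest of your argument — Nelson's theorem, the identity $\alpha_t(\pi(a))=\pi\bigl(\sum_n (it)^n\delta^n(a)/n!\bigr)$ on analytic elements, and the invariant-core identification of the generator — goes through.
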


Physically, we interpret $\omega$ as a \textit{mixed state} of the quantum system whose observables lie in $\mathscr{A}$. Also, we interpret the condition $\omega(\delta(x))=0$ for all $x\in \dom \delta$ as saying $\omega$ is an \textit{equilibrium state} for the system. For more details, see the introduction of \cite{Bratteli-RobinsonQSM}. We state our application formally below.\\

\noindent \textbf{Application 1.} Let $\mathscr{A}$ be a $C^*$-algebra, $\delta$ a derivation on $\mathscr{A}$, and $\omega$ a state on $\mathscr{A}$ which satisfy the hypotheses of Theorem\autoref{B-R}. For every natural number $n,$ $\ker \delta^n=\ker \delta.$\\

As a second application of our main result, we provide sufficient conditions for when two operators satisfying the Heisenberg Commutation Relation must both be unbounded. 

\begin{defn}\label{HCR} \normalfont Let $A$ and $B$ be two (possibly unbounded) self-adjoint operators on a Hilbert space $H$, with domains $\dom A$ and $\dom B$, respectively. We say $A$ and $B$ \textit{satisfy the Heisenberg Commutation Relation} if there is a dense subspace $K$ of $H$ satisfying $$K\subseteq \dom [A,B]:=\{h\in \dom A\cap \dom B : Ah\in \dom B, Bh\in \dom A\}$$ and $[A,B]k=ik$ for all $k\in K.$
\end{defn}

The classical example of such a pair is the \textit{Schr\"odinger pair}, which we define in Example\autoref{Spair}. Both operators are in this pair are unbounded. A large body of research has been committed to finding sufficient conditions for when two operators satisying the Heisenberg Commutation Relation must be unitarily equivalent to a direct sum of copies of the Schr\"odinger pair. One of the most famous results is the following:

\begin{thm}[Dixmier, \cite{Dixmier}] Suppose $A$ and $B$ are closed symmetric operators on a Hilbert space $H$, and $K$ is a dense subspace of $H$ that is contained in $\dom A\cap \dom B$ and is invariant under $A$ and $B$. If $A$ and $B$ satisfy the Heisenberg Commutation Relation on $K$ and the restriction of $A^2+B^2$ to $K$ is essentially self-adjoint, then $A$ and $B$ are unitarily equivalent to a direct sum of copies of the Schr\"odinger pair. 
\end{thm}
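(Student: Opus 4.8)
The plan is to pass from the infinitesimal relation $[A,B]k=ik$ on $K$ to the integrated (Weyl) form of the canonical commutation relations and then invoke the Stone--von Neumann uniqueness theorem; the hypothesis that $A^{2}+B^{2}$ is essentially self-adjoint on $K$ is precisely what legitimizes this passage (it can fail without a condition of this type, as the classical counterexamples to ``Heisenberg implies Weyl'' show). First set $N_{0}:=(A^{2}+B^{2})|_{K}$ and introduce the formal annihilation operator $a_{0}:=\tfrac{1}{\sqrt{2}}(A+iB)|_{K}$, whose formal adjoint on $K$ is $a_{0}^{*}:=\tfrac{1}{\sqrt{2}}(A-iB)|_{K}$; since $K$ is invariant under $A$ and $B$, both $a_{0}$ and $a_{0}^{*}$ map $K$ into $K$, and a direct computation using $[A,B]=i$ gives the operator identity $N_{0}=2a_{0}^{*}a_{0}+1$ on $K$. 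Hence $\langle N_{0}h,h\rangle=2\|a_{0}h\|^{2}+\|h\|^{2}\ge\|h\|^{2}$ for every $h\in K$, so the self-adjoint operator $N:=\overline{N_{0}}$ (which exists by hypothesis) satisfies $N\ge 1$.

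Next I would establish that $\overline{A}$ and $\overline{B}$ are self-adjoint and that there is a dense $\overline{A}$- and $\overline{B}$-invariant domain of common analytic vectors. Using the invariance of $K$ one has $[A,N_{0}]=2iB$ and $[B,N_{0}]=-2iA$ on $K$, and from $N\ge 1$ the estimates $\|Ah\|,\|Bh\|\le\|Nh\|$ together with $|\langle Ah,Nh\rangle-\langle Nh,Ah\rangle|,\,|\langle Bh,Nh\rangle-\langle Nh,Bh\rangle|\le 2\|N^{1/2}h\|^{2}$ for $h\in K$. Since $K$ is a core for $N$, Nelson's commutator theorem yields that $A|_{K}$ and $B|_{K}$ are essentially self-adjoint, and Nelson's analytic-vector theorem applied to the self-adjoint operator $N$ produces a dense set $\mathcal{D}$ of analytic vectors for $N$ on which the relations $[a,a^{*}]=1$ and $[N,a]=-2a$ hold strictly, where $a:=\tfrac{1}{\sqrt{2}}(\overline{A}+i\overline{B})$ and $a^{*}$ both leave $\mathcal{D}$ invariant.

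Working on $\mathcal{D}$ I would then run the standard ladder argument. Because $a$ lowers each $N$-eigenvalue by $2$ while $N\ge 1$, every $N$-eigenvalue lies in $\{1,3,5,\dots\}$ and $\ker\overline{a}=\ker(N-1)=:H_{0}$; setting $H_{k}:=(\overline{a}^{*})^{k}H_{0}$ one checks that the $H_{k}$ are mutually orthogonal and, using density of $\mathcal{D}$ and the eigenexpansion of $N$, that $H=\bigoplus_{k\ge 0}H_{k}$ with $\overline{a}^{*}$ acting as the Fock shift and $\overline{a}$ as its adjoint. This is exactly a multiple of the annihilation/creation pair of the Schr\"odinger representation; inverting $\overline{A}=\tfrac{1}{\sqrt{2}}(\overline{a}+\overline{a}^{*})$ and $\overline{B}=\tfrac{1}{i\sqrt{2}}(\overline{a}-\overline{a}^{*})$ exhibits $(A,B)$ as a direct sum of copies of the Schr\"odinger pair (equivalently, the Weyl relations now follow by exponentiating on $\mathcal{D}$, and one applies Stone--von Neumann).

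The main obstacle is the analytic-vector bookkeeping of the middle step: checking the commutator estimates carefully and promoting them to essential self-adjointness of $A$ and $B$ on $K$, and, more delicately, producing a common core of analytic vectors for $N$ that is invariant under $a$ and $a^{*}$ --- this is the one place where essential self-adjointness of $A^{2}+B^{2}$ is genuinely used. A secondary point needing care is that $\bigoplus_{k}H_{k}$ exhausts $H$, i.e.\ that $N$ has pure point spectrum with no ``energy escaping to infinity''; once that is secured, the remaining identifications are formal manipulations of the commutation relations.
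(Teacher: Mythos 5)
This theorem is quoted in the paper as a classical result of Dixmier and is not proved there, so there is no in-paper argument to compare against; I can only assess your sketch on its own terms. Your outline follows the standard Rellich--Dixmier route, and the formal computations are all correct: with $[A,B]=i$ on $K$ one indeed gets $a_0^*a_0=\tfrac12(A^2+B^2-1)$, hence $N_0=2a_0^*a_0+1$, $[a_0,a_0^*]=1$, $[N_0,a_0]=-2a_0$, $N\ge 1$, and the bounds $\|Ah\|^2\le\langle Nh,h\rangle\le\|Nh\|\,\|h\|$ and $|\langle [N_0,A]h,h\rangle|=2|\langle Bh,h\rangle|\le\langle Nh,h\rangle$ do verify the hypotheses of Nelson's commutator theorem (since essential self-adjointness of $(A^2+B^2)|_K$ says precisely that $K$ is a core for $N$), which forces $A$ and $B$, being closed symmetric extensions of the essentially self-adjoint $A|_K$ and $B|_K$, to be self-adjoint. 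That said, you should be candid that the two points you flag as ``needing care'' are not bookkeeping but the entire analytic content of the theorem: (1) promoting the commutation relations from $K$ to a dense $a$- and $a^*$-invariant domain attached to the spectral calculus of $N$ (e.g.\ showing $a$ shifts the spectral subspaces $\chi_{[0,n]}(N)H$ as the ladder identity predicts), and (2) proving $N$ has pure point spectrum $\{1,3,5,\dots\}$ so that $\bigoplus_k H_k$ exhausts $H$. This is exactly where Nelson's counterexample (a pair satisfying the Heisenberg relation on an invariant dense domain but not the Weyl relations) lives, so these steps cannot be waved through; as written your argument is a correct and well-chosen strategy with the hard middle left open, rather than a proof. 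Filling it in along your lines is standard but lengthy (see Putnam or Reed--Simon, \S X.6); alternatively one can shortcut the ladder analysis by deriving the Weyl relations directly from the commutator-theorem estimates and finishing with Stone--von Neumann, as you note in your final parenthesis.
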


If two operators are unitarily equivalent to copies of the Schr\"odinger pair, then they too must be unbounded. However, there exist examples, one of which we provide in Example\autoref{bddunbdd}, of two operators satisfying the Heisenberg Commutation Relation where one is bounded and the other is unbounded. Our result yields sufficient conditions for when two operators satisfying the Heisenberg Commutation Relation must both be unbounded without showing the two operators are unitarily equivalent to copies of the Schr\"odinger pair.\\

\noindent \textbf{Application 2.} Let $A$ and $B$ be self-adjoint operators on a Hilbert space $H$. If $A$ and $B$ satisfy the Heisenberg Commutation Relation, then (1) $\dom [A,B]$ is not a core for $A$ and $B$ \textit{or} (2) both $A$ and $B$ must be unbounded.\\

Here we outline the rest of the paper. Section 2 is devoted to providing background and summarizing some of Christensen's results from \cite{Christensen1} and \cite{Christensen2}. In Section 3, we prove our main result, and in Section 4, we prove our applications.

\section{\label{sec:level1} Background and Examples of Weak $D$-differentiability}

Let $D$ be a self-adjoint operator with domain $\dom D\subseteq H$. For any $t\in \mathbb{R}$, the operator $e^{itD}$ is unitary, and the one-parameter family $\{e^{itD}\}_{t\in \mathbb{R}}$ is strongly continuous. For $x\in B(H)$ and $t\in \mathbb{R}$, define $\alpha_t(x):=e^{itD}xe^{-itD}$. Then $\{\alpha_t\}_{t\in \mathbb{R}}$ defines a flow on $B(H)$, and more specifically, is a one-parameter automorphism group on $B(H).$ While the \textit{infinitesimal generator} of this automorphism group is a natural derivation on $B(H)$ to consider, we focus instead on a related derivation with a larger domain. 

\begin{defn}\normalfont An operator $x\in B(H)$ is \textit{weakly $D$-differentiable} if there exists $y\in B(H)$ such that for every $h,k\in H,$ $$\lim\limits_{t\to 0} \abs{\lan \lp \frac{\alpha_t(x)-x}{t}-y\rp h,k\ran}=0.$$ Equivalently, for every $h,k\in H$ the function $t\mapsto \lan \alpha_t(x)h,k\ran$ is continuously differentiable.
\end{defn}

\begin{thm}[Christensen, 3.8 \cite{Christensen1}]\label{weaklyd} Let $x$ be a bounded operator on $H$. The following properties are equivalent:
\begin{enumerate}[(i)]
\item $x$ is weakly $D$-differentiable.
\item There exists $y\in B(H)$ such that for every $h\in H,$ $$\lim\limits_{t\to 0} \norm{ \lp \frac{\alpha_t(x)-x}{t}-y\rp h}=0.$$
\item There exists $c>0$ such that for all $t\in \mathbb{R}$, $$\norm{\alpha_t(x)-x}\leq c\abs{t}.$$
\item The commutator $[iD,x]$ is defined and bounded on the domain of $D.$
\item The commutator $[iD,x]$ is defined and bounded on a core for $D.$
\item The sesquilinear form on $\dom D\times \dom D$ given by $$(h,k)\mapsto i\lan xh,Dk\ran-i\lan xDh,k\ran$$ is bounded.
\item The matrix $m([iD,x])_{rc}=i(DP_rxP_c-P_rxP_cD)$ defines a bounded operator on $H$, where $(P_n)_{n\in \mathbb{Z}}$ are the spectral projections of the intervals $(n-1,n]$.
\end{enumerate}
If any of the above conditions hold, then $x(\dom D)\subseteq \dom D$ and $\delta^D_w(x)|_{\dom D}=i[D,x].$ We write $x\in \dom \delta^D_w$ and the $y$ in item $(ii)$ satisfies $y=\delta^D_w(x)$. Moreover, for any $h,k\in H,$ $\frac{d}{dt}\lan \alpha_t(x)h,k\ran=\lan \alpha_t(\delta^D_w(x))h,k\ran.$
\end{thm}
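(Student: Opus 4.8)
The plan is to prove the seven conditions equivalent by a cycle of implications, organized around two hubs: condition (vi), the boundedness of the commutator form, is the easiest consequence of the analytic hypotheses (i) and (iii), while condition (iv), asserting that $[iD,x]$ is a genuine bounded operator on $\dom D$ with $x(\dom D)\subseteq\dom D$, is the strongest structural statement, from which the remaining conditions and both ``moreover'' assertions fall out. Concretely I would establish $(i)\Rightarrow(vi)$, $(iii)\Rightarrow(vi)$, $(vi)\Rightarrow(iv)$, $(iv)\Leftrightarrow(v)$, $(iv)\Leftrightarrow(vii)$, $(iv)\Rightarrow(iii)$, $(iv)\Rightarrow(ii)$, and the trivial $(ii)\Rightarrow(i)$, which together render all seven equivalent.

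The computational backbone is that for every $h,k\in\dom D$ the scalar function $\phi(t)=\langle\alpha_t(x)h,k\rangle=\langle x e^{-itD}h,e^{-itD}k\rangle$ is automatically differentiable, using only that $x$ is bounded and that $e^{-itD}$ commutes with $D$ and hence preserves $\dom D$; the product rule gives
$$\phi'(t)=i\langle x e^{-itD}h,D e^{-itD}k\rangle-i\langle x D e^{-itD}h,e^{-itD}k\rangle,$$
so that $\phi'(0)$ is exactly the value of the sesquilinear form in (vi). Thus $(i)\Rightarrow(vi)$ is immediate, since the definition of weak $D$-differentiability forces $\phi'(0)=\langle yh,k\rangle$, making the form bounded by $\norm{y}$ and identifying it with $y$; and $(iii)\Rightarrow(vi)$ follows because the Lipschitz bound makes every difference quotient satisfy $\abs{\langle\frac{\alpha_t(x)-x}{t}h,k\rangle}\le c\norm{h}\norm{k}$, hence so does its limit $\phi'(0)$.

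For the central step $(vi)\Rightarrow(iv)$ I would represent the bounded form by $y\in B(H)$ via Riesz, so that $\langle yh,k\rangle=i\langle xh,Dk\rangle-i\langle xDh,k\rangle$ for $h,k\in\dom D$. Rewriting this as $\langle ixh,Dk\rangle=\langle yh+ixDh,k\rangle$ for all $k\in\dom D$ and invoking $D=D^*$ yields $ixh\in\dom D$ and $iDxh=yh+ixDh$, that is, $x(\dom D)\subseteq\dom D$ and $[iD,x]h=yh$ on $\dom D$; this single step delivers the domain-invariance, the identity $\delta^D_w(x)|_{\dom D}=i[D,x]$, and $y=\delta^D_w(x)$. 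From (iv) the remaining implications are dynamical: for $h\in\dom D$ the curve $t\mapsto\alpha_t(x)h$ stays in $\dom D$ and is strongly differentiable with $\frac{d}{dt}\alpha_t(x)h=e^{itD}[iD,x]e^{-itD}h=\alpha_t(y)h$, so the fundamental theorem of calculus gives $\alpha_t(x)h-xh=\int_0^t\alpha_s(y)h\,ds$. Since $\norm{\alpha_s(y)}=\norm{y}$, this produces the estimate $\norm{\alpha_t(x)-x}\le\abs{t}\norm{y}$ on the dense set $\dom D$ and hence (iii), while dividing by $t$ and using strong continuity of $s\mapsto\alpha_s(y)h$ gives $\frac{\alpha_t(x)-x}{t}h\to yh$ for $h\in\dom D$; the uniform bound $\norm{T_t}\le\norm{y}$ then upgrades this to all of $H$ by a standard $\varepsilon/3$ argument, yielding (ii), and $(ii)\Rightarrow(i)$ is trivial. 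The same vector-valued derivative, translated by $\alpha_t(\frac{\alpha_s(x)-x}{s})\to\alpha_t(y)$, extends the formula $\frac{d}{dt}\langle\alpha_t(x)h,k\rangle=\langle\alpha_t(\delta^D_w(x))h,k\rangle$ to all $h,k\in H$. Finally, $(iv)\Rightarrow(v)$ is trivial since $\dom D$ is itself a core, while $(v)\Rightarrow(iv)$ I would obtain by approximating $h\in\dom D$ from the core and noting that closedness of $D$ together with the uniform bound $\norm{[iD,x]h_n}\le c\norm{h_n}$ makes $Dxh_n$ Cauchy; and $(iv)\Leftrightarrow(vii)$ follows because the spectral projections commute with $D$, so the $(r,c)$ block $i(DP_rxP_c-P_rxP_cD)$ equals $P_r[iD,x]P_c$, identifying condition (vii) with boundedness of the block matrix of $y$.

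I expect the main obstacle to be $(vi)\Rightarrow(iv)$ together with the ensuing strong-differentiability upgrade: extracting the domain-invariance $x(\dom D)\subseteq\dom D$ from a merely bounded form is precisely where self-adjointness of $D$ (so that $D^*=D$ and the adjoint characterization applies) is indispensable, and converting the weak or Lipschitz data of (i) and (iii) into the genuine strong limit of (ii) requires the flow ODE and a careful density argument rather than any soft functional-analytic principle. The reconstruction direction $(vii)\Rightarrow(iv)$ is the other delicate point, since it demands justifying that the block sum $\sum_{r,c}P_r y P_c$ reassembles $[iD,x]$ on $\dom D$, a convergence argument carried out over the orthogonal decomposition $H=\bigoplus_n P_nH$.
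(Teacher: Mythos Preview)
This theorem is not proved in the paper at all: it is quoted, with attribution, as Theorem~3.8 of Christensen \cite{Christensen1} and used as a black box throughout Sections~2--4. There is therefore no argument in the present paper against which to compare your proposal.

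That said, your outline is a standard and essentially correct route to the equivalences. The one step I would tighten is your direct $(v)\Rightarrow(iv)$: the uniform bound $\norm{[iD,x]h_n}\le c\norm{h_n}$ by itself does not make $Dxh_n$ Cauchy. You must first pass to the bounded extension $y$ of $[iD,x]|_{\mathscr X}$ to all of $H$, so that $[iD,x]h_n=yh_n\to yh$; then $Dxh_n=xDh_n-iyh_n$ converges and closedness of $D$ yields $xh\in\dom D$ with $[iD,x]h=yh$. Equivalently, route $(v)$ through $(vi)$ as you already do for $(i)$ and $(iii)$. Your own caveat about $(vii)\Rightarrow(iv)$ is well placed: reassembling $[iD,x]$ on $\dom D$ from its block matrix over $\bigoplus_n P_nH$ is the genuinely technical direction and needs a careful convergence argument, not just the observation that each block equals $P_r y P_c$.
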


\begin{thm}[Christensen, 3.9\cite{Christensen1}]\label{propertiesofdelta_D} The domain of definition $\dom \delta^D_w$ is a strongly dense $*$-subalgebra of $B(H)$ and $\delta^D_w$ is a $*$-derivation into $B(H)$. The graph of $\delta^D_w$ is weak operator topology closed.
\end{thm}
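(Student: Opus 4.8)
The plan is to establish the three assertions in turn, relying throughout on the seven equivalent characterizations in \autoref{weaklyd}, under which each $\alpha_t$ is a $*$-automorphism of $B(H)$. For the algebraic structure I would work with the strong-limit formulation (ii) together with the uniform norm bound (iii). Closure under sums and scalars is immediate, since $\frac{\alpha_t(x)-x}{t}\to\delta^D_w(x)$ strongly and $\alpha_t$ is linear. For closure under adjoints I would instead use the weak formulation (i): since $\lan\frac{\alpha_t(x^*)-x^*}{t}h,k\ran=\overline{\lan\frac{\alpha_t(x)-x}{t}k,h\ran}$, letting $t\to 0$ shows $x^*\in\dom\delta^D_w$ with $\delta^D_w(x^*)=\delta^D_w(x)^*$. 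For the Leibniz rule I would telescope
\[
\frac{\alpha_t(xy)-xy}{t}=\alpha_t(x)\,\frac{\alpha_t(y)-y}{t}+\frac{\alpha_t(x)-x}{t}\,y,
\]
and let $t\to 0$: the second term converges strongly to $\delta^D_w(x)y$, while the first converges strongly to $x\delta^D_w(y)$ because $\norm{\alpha_t(x)}=\norm{x}$ is uniformly bounded and $\alpha_t(x)\to x$ strongly (the latter from (iii)). This gives $xy\in\dom\delta^D_w$ and the derivation identity, so $\dom\delta^D_w$ is a $*$-subalgebra and $\delta^D_w$ is a $*$-derivation.

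For strong density I would use a G\aa rding-type smoothing. For $\phi\in C_c^\infty(\R)$ define $x_\phi\in B(H)$ by the weak integral $\lan x_\phi h,k\ran=\int_\R\phi(t)\lan\alpha_t(x)h,k\ran\,dt$; this is a bounded sesquilinear form with $\norm{x_\phi}\leq\norm{\phi}_1\norm{x}$. Using the group law $\alpha_s(x_\phi)=\int_\R\phi(u-s)\alpha_u(x)\,du$ and differentiating under the integral (legitimate since $\phi$ is smooth with compact support, so $\frac{\phi(\cdot-s)-\phi}{s}\to-\phi'$ uniformly on a fixed compact set), I would verify condition (i) and conclude $x_\phi\in\dom\delta^D_w$ with $\delta^D_w(x_\phi)=x_{-\phi'}$. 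Finally, choosing an approximate identity $\phi_n\geq 0$, $\int_\R\phi_n=1$, supported near $0$, the estimate $\norm{(x_{\phi_n}-x)h}\leq\int_\R\phi_n(t)\norm{(\alpha_t(x)-x)h}\,dt$ together with the norm-continuity of $t\mapsto\alpha_t(x)h$ forces $x_{\phi_n}\to x$ strongly, giving strong density.

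For closedness of the graph in the weak operator topology I would use the matrix characterization (vii), which sidesteps any need to assume boundedness of the approximating net. Suppose $x_\lambda\to x$ and $\delta^D_w(x_\lambda)\to y$ in WOT. Because each $P_n$ projects onto a bounded spectral interval of $D$, the operators $DP_r$ and $P_cD$ are bounded; since left and right multiplication by a fixed bounded operator is WOT-continuous,
\[
i\lp DP_r x_\lambda P_c-P_r x_\lambda P_c D\rp\longrightarrow i\lp DP_r x P_c-P_r x P_c D\rp
\]
in WOT for each $r,c$. The left-hand side equals $P_r\delta^D_w(x_\lambda)P_c$, which converges in WOT to $P_r y P_c$, so $i(DP_r x P_c-P_r x P_c D)=P_r y P_c$ for all $r,c$. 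Hence the matrix $m([iD,x])_{rc}$ defines the bounded operator $y$, and condition (vii) yields $x\in\dom\delta^D_w$ with $\delta^D_w(x)=y$.

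I expect the strong-density step to be the main obstacle: realizing the weak integral $x_\phi$ as a genuine bounded operator, justifying differentiation under the integral sign to land inside $\dom\delta^D_w$, and controlling the approximate-identity limit all require care. By contrast, the algebraic closure properties and the WOT-closedness of the graph follow fairly directly from the equivalences in \autoref{weaklyd}, the latter precisely because characterization (vii) converts membership in the domain into a statement about matrix entries that is stable under WOT limits.
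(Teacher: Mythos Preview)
The paper does not supply its own proof of this theorem: it is quoted verbatim as a result of Christensen (Theorem~3.9 of \cite{Christensen1}) and is used as a black box throughout Sections~3 and~4. There is therefore no in-paper argument to compare your proposal against.

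That said, your sketch is a sound and standard route to the result. The algebraic closure and $*$-derivation identities follow exactly as you indicate from the strong-limit and Lipschitz characterizations (ii)--(iii) of Theorem~\ref{weaklyd}; the G\aa rding mollification argument is the usual way to obtain strong density of the smooth elements for a one-parameter automorphism group, and your identification of the Bochner-integral subtlety as the one place needing care is accurate. Your use of the matrix characterization (vii) for WOT-closedness is particularly clean: because $DP_r$ and $P_cD$ are bounded (each $P_n$ projecting onto a bounded spectral interval), the block entries $i(DP_rx_\lambda P_c-P_rx_\lambda P_cD)$ pass to WOT limits without any uniform-boundedness assumption on the net, which is exactly what is needed. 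One small point worth making explicit is that condition (vii) not only characterizes membership in $\dom\delta^D_w$ but also identifies the bounded operator defined by the matrix with $\delta^D_w(x)$ itself, so that $P_r\delta^D_w(x_\lambda)P_c=m([iD,x_\lambda])_{rc}$; once this is stated, your conclusion $\delta^D_w(x)=y$ is immediate.
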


\begin{defn}\normalfont An operator $x\in B(H)$ is \textit{$n$-times weakly $D$-differentiable} if for every $k=0,...,n-1$, $(\delta^D_w)^{k}(x)\in \dom \delta^D_w$. We denote this by $x\in \dom(\delta^D_w)^n.$
\end{defn}

\begin{prop}[Christensen, 2.6 \cite{Christensen2}] A bounded operator $x$ on $H$ is $n$-times weakly $D$-differentiable if and only if for any pair $h,k\in H$ the function $t\mapsto \lan \alpha_t(x)h,k\ran$ is $n$-times continuously differentiable. If $x$ is $n$-times weakly $D$-differentiable, then $$\frac{d^n}{dt^n}\lan \alpha_t(x)h,k\ran=\lan \alpha_t((\delta^D_w)^n(x))h,k\ran.$$
\end{prop}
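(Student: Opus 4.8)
The plan is to induct on $n$, letting Theorem\autoref{weaklyd} --- in particular its closing clause that $\frac{d}{dt}\lan\alpha_t(x)h,k\ran=\lan\alpha_t(\delta^D_w(x))h,k\ran$ for all $t\in\R$ and $h,k\in H$ whenever $x\in\dom\delta^D_w$ --- carry essentially all of the analytic weight. The base case $n=1$ I would take to be exactly the equivalence already recorded in the definition of weak $D$-differentiability: $x\in\dom\delta^D_w$ if and only if $t\mapsto\lan\alpha_t(x)h,k\ran$ is continuously differentiable for every $h,k\in H$, in which case its first derivative is $\lan\alpha_t(\delta^D_w(x))h,k\ran$. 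The one point that deserves to be spelled out in the base case is why differentiability of the difference quotient at $t=0$, as the definition literally demands, upgrades to continuous differentiability on all of $\R$: this comes from the cocycle identity $\alpha_{t+s}(x)=\alpha_t(\alpha_s(x))$, which lets one rewrite $\lan\alpha_{t+s}(x)h,k\ran$ as $\lan\alpha_s(x)e^{-itD}h,e^{-itD}k\ran$, combined with strong continuity of $t\mapsto e^{-itD}h$ and boundedness of $\delta^D_w(x)$. Since this is already internal to Theorem\autoref{weaklyd}, in the write-up I would simply cite that theorem for the base case.

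For the inductive step, assume the proposition for $n-1$ and let $x\in\dom(\delta^D_w)^n$. By definition this says precisely that $x\in\dom\delta^D_w$ and $\delta^D_w(x)\in\dom(\delta^D_w)^{n-1}$. Applying the inductive hypothesis to the bounded operator $\delta^D_w(x)$, the function $t\mapsto\lan\alpha_t(\delta^D_w(x))h,k\ran$ is $(n-1)$-times continuously differentiable with $(n-1)$-st derivative $\lan\alpha_t((\delta^D_w)^n(x))h,k\ran$. On the other hand the base case, applied to $x\in\dom\delta^D_w$, says $\frac{d}{dt}\lan\alpha_t(x)h,k\ran$ exists and equals that very function $\lan\alpha_t(\delta^D_w(x))h,k\ran$. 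Hence $t\mapsto\lan\alpha_t(x)h,k\ran$ is $n$-times continuously differentiable with $n$-th derivative $\lan\alpha_t((\delta^D_w)^n(x))h,k\ran$, which is both the forward implication and the asserted derivative formula.

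For the converse, suppose $t\mapsto\lan\alpha_t(x)h,k\ran$ is $n$-times continuously differentiable for every $h,k\in H$. In particular it is continuously differentiable, so the base case gives $x\in\dom\delta^D_w$ with $\frac{d}{dt}\lan\alpha_t(x)h,k\ran=\lan\alpha_t(\delta^D_w(x))h,k\ran$. Then $t\mapsto\lan\alpha_t(\delta^D_w(x))h,k\ran$, being the derivative of an $n$-times continuously differentiable function, is $(n-1)$-times continuously differentiable for every $h,k\in H$; the inductive hypothesis applied to $\delta^D_w(x)$ then yields $\delta^D_w(x)\in\dom(\delta^D_w)^{n-1}$. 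Together with $x\in\dom\delta^D_w$, this is exactly the statement $x\in\dom(\delta^D_w)^n$.

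I do not expect a genuine obstacle once the induction is framed against Theorem\autoref{weaklyd}: the whole argument reduces to the elementary fact that a complex-valued function of a real variable is $n$-times continuously differentiable if and only if its derivative is $(n-1)$-times continuously differentiable, plus careful bookkeeping of which bounded operator is being conjugated by $\alpha_t$ at each stage. If anything is delicate it is the base case's passage from differentiability at $0$ to continuous differentiability everywhere, and that is precisely what Theorem\autoref{weaklyd} has already packaged; so in practice the proof amounts to invoking that theorem and running the two-directional induction above.
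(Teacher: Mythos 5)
Your argument is correct, but note that the paper itself offers no proof of this proposition: it is imported verbatim as Proposition 2.6 of \cite{Christensen2}, so there is no in-paper argument to compare against. Your two-directional induction is the natural way to derive the statement from what the paper does record, namely the definition of $n$-times weak $D$-differentiability (which is literally ``$x\in\dom\delta^D_w$ and $\delta^D_w(x)\in\dom(\delta^D_w)^{n-1}$'') together with the closing clause of Theorem\autoref{weaklyd}, $\frac{d}{dt}\lan \alpha_t(x)h,k\ran=\lan \alpha_t(\delta^D_w(x))h,k\ran$. The bookkeeping is right: $(\delta^D_w)^{n-1}(\delta^D_w(x))=(\delta^D_w)^n(x)$ gives the derivative formula, and the converse correctly uses that the derivative of a $C^n$ function is $C^{n-1}$ before applying the inductive hypothesis to $\delta^D_w(x)$. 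You are also right to flag that the only genuinely analytic content --- upgrading weak differentiability of the difference quotient at $t=0$ to continuous differentiability of $t\mapsto\lan\alpha_t(x)h,k\ran$ on all of $\R$, and the existence of a single bounded $y$ realizing all the scalar derivatives --- is already packaged in Theorem\autoref{weaklyd}, so your induction legitimately treats the $n=1$ case as given. This is essentially how Christensen argues as well, so nothing further is needed.
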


Analogous to Theorem \autoref{weaklyd}, Christensen shows in \cite{Christensen2} that higher order weak $D$-differentiability is directly tied to iterated commutators $[iD,...,[iD,x]]$. 

\begin{prop}[Christensen, 3.3 \cite{Christensen2}]\label{prop3.3} Let $x\in \dom (\delta^D_w)^n$. Then for $k=1,...,n,$
\begin{enumerate}[(i)]
\item $(\delta^D_w)^{k-1}(x) (\dom D)\subseteq \dom D$
\item $x (\dom D^k)\subseteq \dom D^k$
\item $\dom \underbrace{[iD,...,[iD,x]]}_{k\text{ times}}=\dom D^k$
\item $(\delta^D_w)^k(x)|_{\dom D^k}=\underbrace{[iD,...,[iD,x]]}_{k\text{ times}}$
\item $(\delta^D_w)^k(x)$ is the bounded extension of $\underbrace{[iD,...,[iD,x]]}_{k\text{ times}}$ from $\dom D^k$ to all of $H$.
\end{enumerate}
\end{prop}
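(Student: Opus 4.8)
The plan is to establish the five items together by induction on $k$, the engine being \autoref{weaklyd} applied not just to $x$ but to each of its successive weak derivatives. For the base case $k=1$, items (i), (ii), (iv), (v) are exactly the closing assertions of \autoref{weaklyd} applied to $x$: that theorem yields $x(\dom D)\subseteq\dom D$, the identity $\delta^D_w(x)|_{\dom D}=i[D,x]=[iD,x]$, and the fact that $\delta^D_w(x)\in B(H)$ is the (unique, as $\dom D$ is dense) bounded extension of $[iD,x]$. For (iii), the operator $[iD,x]=iDx-x\,iD$ has domain $\{h:xh\in\dom D\}\cap\dom D$, which collapses to $\dom D$ precisely because $x(\dom D)\subseteq\dom D$.

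For the inductive step, assume (i)--(v) at level $k-1$ and fix $1<k\le n$. Set $y:=(\delta^D_w)^{k-1}(x)$; since $x\in\dom(\delta^D_w)^n$ we have $y\in\dom\delta^D_w$ and also $\delta^D_w(x)\in\dom(\delta^D_w)^{k-1}$, so the inductive hypothesis is available for $x$, for $\delta^D_w(x)$, and for $y$. Item (i) at level $k$ is \autoref{weaklyd} applied to $y$, giving $y(\dom D)\subseteq\dom D$. For (ii), take $h\in\dom D^k$, so $h,Dh\in\dom D^{k-1}$; \autoref{weaklyd} gives $xh\in\dom D$, and on $\dom D$ one has the Leibniz identity $D(xh)=x(Dh)+[D,x]h=x(Dh)-i\,\delta^D_w(x)h$. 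By the inductive form of (ii) applied to $x$ and to $\delta^D_w(x)$, both $x(Dh)$ and $\delta^D_w(x)h$ lie in $\dom D^{k-1}$, so $D(xh)\in\dom D^{k-1}$; combined with $xh\in\dom D$ this is exactly the statement $xh\in\dom D^k$.

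For (iii)--(v), view the $k$-fold iterated commutator as $[iD,C_{k-1}]$ where $C_{k-1}=\underbrace{[iD,\dots,[iD,x]]}_{k-1\text{ times}}$ equals, by the inductive forms of (iii) and (iv), the restriction $y|_{\dom D^{k-1}}$. Thus $[iD,C_{k-1}]h=iD(yh)-i\,y(Dh)$ is meaningful exactly when $h\in\dom D^{k-1}$, $Dh\in\dom D^{k-1}$, and $yh\in\dom D$; using $\dom D^{k-1}\subseteq\dom D$, the identity $\{h\in\dom D:Dh\in\dom D^{k-1}\}=\dom D^k$, and the automatic membership $yh\in\dom D$ from (i) at level $k$, the domain is exactly $\dom D^k$, proving (iii). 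For (iv), \autoref{weaklyd} applied to $y$ gives $(\delta^D_w)^k(x)h=iD(yh)-i\,y(Dh)$ for all $h\in\dom D$; restricting to $h\in\dom D^k$ and substituting $y|_{\dom D^{k-1}}=C_{k-1}$ (legitimate since $h$ and $Dh$ lie in $\dom D^{k-1}$) identifies the right-hand side with the $k$-fold iterated commutator at $h$. Then (v) follows at once, since $(\delta^D_w)^k(x)$ is bounded ($k\le n$), $\dom D^k$ is dense in $H$, and by (iv) it extends the iterated commutator off $\dom D^k$.

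The delicate point throughout is domain bookkeeping for unbounded operators. One needs both descriptions $\dom D^k=\{h\in\dom D^{k-1}:D^{k-1}h\in\dom D\}=\{h\in\dom D:Dh\in\dom D^{k-1}\}$ (valid for any closed, hence any self-adjoint, $D$) to see that the iterated commutator's domain is no larger than $\dom D^k$, and one must keep track that each rung of the commutator tower is built from the \emph{restricted} operator $C_{j}=(\delta^D_w)^j(x)|_{\dom D^j}$ and not from the full bounded operator $(\delta^D_w)^j(x)$. Once these are pinned down, the rest is a routine propagation of \autoref{weaklyd} and the Leibniz rule through the induction, and I expect no obstacle beyond this careful bookkeeping.
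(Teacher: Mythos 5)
The paper states this proposition as a quoted result of Christensen (\cite{Christensen2}, Prop.~3.3) and supplies no proof of its own, so there is no internal argument to compare against; judged on its own terms, your induction is correct and is essentially the argument one would expect. The base case is exactly the closing clause of Theorem \ref{weaklyd}, and the inductive step correctly threads the Leibniz identity $D(xh)=x(Dh)-i\,\delta^D_w(x)h$ through the two descriptions of $\dom D^k$; your computation that the domain of $[iD,C_{k-1}]$ is $\dom D^{k-1}\cap\{h\in\dom D: Dh\in\dom D^{k-1}\}=\dom D^k$ (using (i) at level $k$ to absorb the condition $yh\in\dom D$) is the right way to get both inclusions in (iii). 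Two points should be made explicit rather than left as asides. First, the inductive hypothesis must be quantified over all operators in $\dom(\delta^D_w)^{k-1}$, or at least invoked for $\delta^D_w(x)$ as well as for $x$: step (ii) at level $k$ needs $\delta^D_w(x)(\dom D^{k-1})\subseteq\dom D^{k-1}$, which is item (ii) at level $k-1$ applied to $\delta^D_w(x)\in\dom(\delta^D_w)^{n-1}$ — you do note this, but it is precisely the place where a naive ``induct on $k$ for the fixed $x$ only'' would break. Second, item (v) requires $\dom D^k$ to be dense in $H$ for the bounded extension to exist and be unique; this is true for self-adjoint $D$ (vectors whose spectral measure has compact support form a common core for all powers of $D$) but deserves a sentence of justification. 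With those two remarks supplied, the proof is complete.
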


\begin{thm}[Christensen, 4.1 \cite{Christensen2}]\label{ntimes} Let $x\in B(H)$ and $n$ be a natural number. The following are equivalent: 
\begin{enumerate}[(i)]
\item $x\in \dom(\delta^D_w)^n$.
\item $x$ is $n$ times weakly $D$-differentiable.
\item For all $k=1,...,n,$ $x(\dom D^k)\subseteq \dom D^k$ and $\underbrace{[iD,...,[iD,x]]}_{k\text{ times}}$ is defined and bounded on $\dom D^k$ with closure $(\delta^D_w)^k(x)$.
\item There exists a core $\mathscr{X}$ for $D$ such that for any $k=1,...,n,$ the operator $\underbrace{[iD,...,[iD,x]]}_{k\text{ times}}$ is defined and bounded on $\mathscr{X}.$
\end{enumerate}
\end{thm}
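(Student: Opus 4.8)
The plan is to prove the four conditions equivalent through the cycle of implications (i) $\Rightarrow$ (iii) $\Rightarrow$ (iv) $\Rightarrow$ (i); the equivalence (i) $\Leftrightarrow$ (ii) is precisely the Proposition of Christensen stated just above (2.6 of \cite{Christensen2}), so nothing new is needed there, and combining that equivalence with the cycle yields all implications among (i)--(iv).

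For (i) $\Rightarrow$ (iii) there is nothing to do beyond unpacking Proposition \autoref{prop3.3}: if $x\in\dom(\delta^D_w)^n$, then for each $k\le n$ part (ii) of that proposition gives $x(\dom D^k)\subseteq\dom D^k$, part (iii) says the $k$-fold iterated commutator $C_k:=[iD,\dots,[iD,x]]$ has domain exactly $\dom D^k$ (so it is defined there), and parts (iv)--(v) say $(\delta^D_w)^k(x)$ is the bounded extension of $C_k$ off the dense subspace $\dom D^k$, which is the same as its closure. For (iii) $\Rightarrow$ (iv) I would take $\mathscr{X}:=\bigcap_{m\ge 1}\dom D^m$, the space of $C^\infty$-vectors for $D$; this is a core for $D$ by the spectral theorem (for $h\in\dom D$ the truncations $E_D([-N,N])h$ are $C^\infty$-vectors converging to $h$ in the graph norm), and since $\mathscr{X}\subseteq\dom D^k$ for every $k\le n$, hypothesis (iii) makes each $C_k$ defined and bounded on $\mathscr{X}$.

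The substance of the argument is (iv) $\Rightarrow$ (i), which I would prove by induction on $n$. The base case $n=1$ is exactly the implication (v) $\Rightarrow$ (i) of Theorem \autoref{weaklyd}. For the inductive step, suppose the result holds for $n-1$ and we are given a core $\mathscr{X}$ for $D$ on which $C_1,\dots,C_n$ are all defined and bounded. I would first record two elementary domain facts, each by a short induction from the recursion $C_k=[iD,C_{k-1}]$: that $\dom C_n\subseteq\dom C_{n-1}\subseteq\cdots\subseteq\dom C_1\subseteq\dom D$, and that $\dom C_k\subseteq\dom D^k$ (writing $C_k h=iD(C_{k-1}h)-iC_{k-1}(Dh)$ requires, among other things, $Dh\in\dom C_{k-1}\subseteq\dom D^{k-1}$). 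In particular $\mathscr{X}\subseteq\dom C_n\subseteq\dom D^n$. Applying the inductive hypothesis to the \emph{same} core $\mathscr{X}$ (on which $C_1,\dots,C_{n-1}$ are defined and bounded) gives $x\in\dom(\delta^D_w)^{n-1}$; set $y:=(\delta^D_w)^{n-1}(x)\in B(H)$. By Proposition \autoref{prop3.3}(iv), $y$ agrees with $C_{n-1}$ on $\dom D^{n-1}$, hence on $\mathscr{X}$; and because $\mathscr{X}\subseteq\dom D^n$ forces $D\mathscr{X}\subseteq\dom D^{n-1}$, also $y(Dh)=C_{n-1}(Dh)$ for all $h\in\mathscr{X}$. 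Since $h\in\mathscr{X}\subseteq\dom C_n$ forces $yh=C_{n-1}h\in\dom D$, we get for every $h\in\mathscr{X}$
$$[iD,y]h=iD(yh)-iy(Dh)=iD(C_{n-1}h)-iC_{n-1}(Dh)=C_n h.$$
Hence $[iD,y]$ is defined on the core $\mathscr{X}$ and bounded there, so $y\in\dom\delta^D_w$ by the implication (v) $\Rightarrow$ (i) of Theorem \autoref{weaklyd}; that is, $(\delta^D_w)^{n-1}(x)\in\dom\delta^D_w$, so $x\in\dom(\delta^D_w)^n$, which closes the induction. (The identification of $(\delta^D_w)^n(x)$ with the closure of $C_n$ then follows from the already-established (i) $\Rightarrow$ (iii).)

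The one delicate point is the bookkeeping in the inductive step: one has to notice that merely requiring $C_n$ to be \emph{defined} on $\mathscr{X}$ already forces $\mathscr{X}\subseteq\dom D^n$, and it is exactly this that lets Proposition \autoref{prop3.3}(iv) transport the identity $y=C_{n-1}$ not only to $\mathscr{X}$ but to $D\mathscr{X}$ as well — without the latter one cannot legitimately substitute $C_{n-1}$ for $y$ inside the commutator $[iD,y]$, and the induction fails.
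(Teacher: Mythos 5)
This theorem is quoted from Christensen \cite{Christensen2} and the paper gives no proof of it, so there is nothing internal to compare against; judged on its own terms, your argument is correct. The cycle (i)~$\Rightarrow$~(iii)~$\Rightarrow$~(iv)~$\Rightarrow$~(i) is sound: (i)~$\Rightarrow$~(iii) is indeed just a reading of Proposition~\autoref{prop3.3}, the $C^\infty$-vectors $\bigcap_m \dom D^m$ do form a core contained in every $\dom D^k$, and your inductive step for (iv)~$\Rightarrow$~(i) correctly isolates the one point that needs care --- that $\mathscr{X}\subseteq\dom C_n$ already forces $\mathscr{X}\subseteq\dom D^n$ and hence $D\mathscr{X}\subseteq\dom D^{n-1}$, which is what licenses replacing $y=(\delta^D_w)^{n-1}(x)$ by $C_{n-1}$ both at $h$ and at $Dh$ before invoking (v)~$\Rightarrow$~(i) of Theorem~\autoref{weaklyd}. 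Two minor remarks: first, (i)~$\Leftrightarrow$~(ii) is in fact immediate from the paper's \emph{definition} of ``$n$-times weakly $D$-differentiable'' (Proposition 2.6 is the further equivalence with $n$-fold differentiability of $t\mapsto\lan\alpha_t(x)h,k\ran$), so that step is even cheaper than you suggest; second, your use of Proposition~\autoref{prop3.3} inside the induction is legitimate only because that proposition is logically prior to Theorem 4.1 in Christensen's development, which is the case, but is worth stating explicitly to dispel any appearance of circularity.
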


\begin{notation} For notational convenience, we define $$d^k(x):=\underbrace{[iD,...,[iD,x]]}_{k\text{ times}}$$ for each $k\in \N.$
\end{notation}

We now present the motivating example for Theorem \autoref{thm:kerD}. Given a $\sigma$-finite measure space $(X,\mu),$ define $$\diag:L^\infty(X,\mu)\to B(L^2(X,\mu))$$ $$\diag(f):=M_f,$$ where $M_fg=fg$ for each $g\in L^2(X,\mu).$ Throughout, we denote the standard orthonormal basis for $\ell^2(\Z)$ by $\{\epsilon_j:j\in \Z\},$ and we denote the matrix representation of an operator $x\in B(\ell^2(\Z))$ with respect to the standard orthonormal basis by $[x_{rc}]$ where $$x_{rc}:=\lan x\epsilon_c,\epsilon_r\ran.$$

\begin{exmp}\label{l^2(Z)}\normalfont  Define $(Df)(j):=jf(j)$ for $f\in \dom D,$ where $$\dom D:=\{f\in \ell^2(\Z): \sum_{j\in \Z} j^2\abs{f(j)}^2<\infty\}.$$ Then,
\begin{enumerate}[(a)]
\item the operator $D$ is self-adjoint.
\item an operator $x\in B(\ell^2(\Z))$ is $n$-times weakly $D$-differentiable if and only if for every $k\leq n$, $x(\dom D^k)\subseteq \dom D^k$ and the matrix $[i^k(r-c)^kx_{rc}]$ with dense domain $\dom D^k$ extends to a bounded operator on $\ell^2(\Z)$. When either condition is satisfied, $$[(\delta^D_w)^n(x)_{rc}]|_{\dom D^n}=[i^n(r-c)^nx_{rc}].$$

\item for any $g\in \ell^\infty(\Z)$, $\delta^D_w(M_g)=0.$
\item for all $n\in \N$, $\ker (\delta^D_w)^n=\diag(\ell^\infty(\Z))$.

\end{enumerate}
\end{exmp}
\begin{proof} 
\begin{enumerate}[(a)]
\item See Example 7.1.5 of \cite{Simon}.
\item Matrix multiplication shows for any $r,c\in \Z,$ $$d^k(x)_{rc}=i^k(r-c)^kx_{rc}.$$ Given $x\in B(\ell^2(\Z))$ such that $x(\dom D^k)\subseteq \dom D^k$ for each $k\leq n$, the domain of $d^k(x)$ is $\dom D^k.$ Theorem\autoref{ntimes} states $x$ is $n$-times weakly $D$-differentiable if and only if for every $k\leq n,$ $x(\dom D^k)\subseteq \dom D^k$ and $d^k(x)$ is bounded on $\dom D^k.$ It follows that $x$ is $n$-times weakly $D$-differentiable if and only if $x(\dom D^k)\subseteq \dom D^k$ and $[d^k(x)_{rc}]=[i^k(r-c)^kx_{rc}]$ is bounded on $\dom D^k$. As $D$ is self-adjoint, $\dom D^k$ is dense in $\ell^2(\Z)$ for all $k\in \N$. Therefore, $[d^k(x)_{rc}]$ extends to a bounded matrix on all of $\ell^2(\Z).$ By Theorem\autoref{ntimes}, the closure $(\delta^D_w)^n(x)$ is the extension of $[i^n(r-c)^nx_{rc}]$ to all of $\ell^2(\Z).$

\item Fix $g\in \ell^\infty(\Z),$ and let $f\in \dom D.$ We show $M_gf\in \dom D.$ Observe $$\sum_{j\in \Z} \abs{j(M_gf)(j)}^2=\sum_{j\in \Z} \abs{jg(j)f(j)}^2\leq \norm{g}^2_\infty \lp \sum_{j\in \Z} \abs{jf(j)}^2\rp<\infty.$$ As $f\in \dom D$ was arbitrary, $M_g(\dom D)\subseteq \dom D,$ and hence, the commutator $[iD,M_g]$ is a well-defined linear operator on $\dom D.$ Furthermore, $iD$ and $M_g$ are diagonal matrices with complex entries (which commute), so the commutator $[iD,M_g]$ is simply a restriction of the 0 operator to $\dom D.$ Theorem\autoref{weaklyd} implies $M_g\in \dom \delta^D_w$ and $\delta^D_w(M_g)$ is the extension of $[iD,M_g]$ to all of $H$. In particular, $\delta^D_w(M_g)=0.$ Hence, $M_g\in \ker \delta^D_w$, and since $g\in \ell^\infty(\Z)$ was arbitrary, $\diag(\ell^\infty(\Z))\subseteq \ker \delta^D_w.$

\item Part (c) quickly implies diag$(\ell^\infty(\Z))\subseteq \ker(\delta^D_w)^n$ for all $n\in \N.$ We now show if $(\delta^D_w)^n(x)=0$, then $x\in\diag(\ell^\infty(\Z)).$ If $x\in \dom (\delta^D_w)^n$ and $(\delta^D_w)^n(x)=0$, then $x\in B(\ell^2(\Z))$ and $(\delta^D_w)^n(x)_{rc}=0$ for every $r,c\in \Z.$ By part (b), $[(\delta^D_w)^n(x)_{rc}]|_{\dom D^n}=[i^n(r-c)^nx_{rc}],$ thus, $i^n(r-c)^nx_{rc}=0$ for every $r,c\in \Z.$ If $r\neq c,$ it must be that $x_{rc}=0$, i.e., $x$ must be zero off the diagonal. As $x\in B(\ell^2(\Z))$, we conclude $x\in \diag(\ell^\infty(\Z)).$ Therefore, $\ker(\delta^D_w)^n=\diag(\ell^\infty(\Z))$ for all $n\in \N.$
\end{enumerate}
\end{proof}

This kernel stabilization phenomenon initially appears unique to the setting of Example\autoref{l^2(Z)}; the self-adjoint operator is multiplicity-free (the von Neumann algebra generated by its spectral projections is a maximal abelian self-adjoint subalgebra of $B(\ell^2(\Z))$) and its eigenvectors constitute our choice of orthonormal basis. In Section 3, we show our example is not unique; kernel stabilization holds for every self-adjoint operator on any Hilbert space.

\section{\label{sec:level1}Kernel Stabilization of $\delta^D_w$}

In this section, we show for any self-adjoint operator $D$ on a Hilbert space, $\ker (\delta^D_w)^n=\ker \delta^D_w$ for all $n\in \N.$ We call this property \textit{kernel stabilization}.

 \begin{prop}\label{vN} Let $H$ be a Hilbert space and $D$ a self-adjoint operator. Then $\ker \delta^D_w$ is a von Neumann algebra.
\end{prop}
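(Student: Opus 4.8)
The plan is to verify the three defining features of a von Neumann algebra subalgebra of $B(H)$: that $\ker \delta^D_w$ is a $*$-subalgebra, that it contains the identity, and that it is closed in the weak operator topology; by the bicommutant theorem these together suffice. The identity is trivial since $\delta^D_w$ is a derivation (so $\delta^D_w(I) = 0$), and closure under adjoints is immediate from Theorem\autoref{propertiesofdelta_D}, which states $\delta^D_w$ is a $*$-derivation: if $\delta^D_w(x) = 0$ then $\delta^D_w(x^*) = \delta^D_w(x)^* = 0$. For the algebra structure, suppose $x, y \in \ker \delta^D_w$. Since $\dom \delta^D_w$ is an algebra (Theorem\autoref{propertiesofdelta_D}) we have $xy \in \dom \delta^D_w$, and the Leibniz rule gives $\delta^D_w(xy) = \delta^D_w(x) y + x \delta^D_w(y) = 0$; linearity of $\delta^D_w$ handles sums and scalar multiples. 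Hence $\ker \delta^D_w$ is a unital $*$-subalgebra.

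For weak operator closure, I would use the fact from Theorem\autoref{propertiesofdelta_D} that the graph of $\delta^D_w$ is closed in the weak operator topology. Suppose $(x_\lambda)$ is a net in $\ker \delta^D_w$ converging WOT to some $x \in B(H)$. Then the net $(x_\lambda, \delta^D_w(x_\lambda)) = (x_\lambda, 0)$ lies in the graph and converges WOT to $(x, 0)$; since the graph is WOT-closed, $(x,0)$ belongs to it, meaning $x \in \dom \delta^D_w$ and $\delta^D_w(x) = 0$, i.e., $x \in \ker \delta^D_w$. Thus $\ker \delta^D_w$ is a WOT-closed unital $*$-subalgebra of $B(H)$, hence a von Neumann algebra.

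I do not anticipate a serious obstacle here: every ingredient is already packaged in Theorem\autoref{propertiesofdelta_D} (the $*$-derivation property, the algebra structure of the domain, and the WOT-closedness of the graph). The only point requiring a modicum of care is making sure the net argument for WOT-closure genuinely uses the closedness of the \emph{graph} rather than merely the closedness of the operator $\delta^D_w$ on its domain — but since the graph contains all pairs $(x_\lambda, 0)$ and WOT convergence in $B(H) \times B(H)$ is componentwise, the conclusion drops out directly. (One could alternatively phrase the whole proof without nets by invoking Kaplansky density or the bicommutant theorem more explicitly, but the graph-closure route is cleanest.)
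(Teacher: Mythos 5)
Your proposal is correct and follows essentially the same route as the paper's own proof: verify that $\ker\delta^D_w$ is a unital $*$-subalgebra using the $*$-derivation and Leibniz properties from Theorem\autoref{propertiesofdelta_D}, and then obtain WOT-closedness from the WOT-closedness of the graph of $\delta^D_w$ via exactly the same net argument. No gaps.
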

\begin{proof} The identity $I$ of $B(H)$ is easily shown to be in $\ker \delta^D_w.$ Let $x\in \ker \delta^D_w.$ As $\dom \delta^D_w$ is a $*$-algebra by Theorem\autoref{propertiesofdelta_D}, $x^*\in \dom \delta^D_w.$ Since $\delta^D_w$ is a $*$-derivation, $\delta^D_w(x^*)=\delta^D_w(x)^*=0.$ Therefore, $x^*\in \ker\delta^D_w.$ Finally, if $x,y\in \ker \delta^D_w$, then $xy\in\dom\delta^D_w$ and $\delta^D_w(xy)=\delta^D_w(x)y+x\delta^D_w(y)=0$, so $xy\in \ker\delta^D_w.$

Let $(x_\lambda)\subset \ker \delta^D_w$ be a net converging in the weak operator topology to some $x\in B(H).$ We show $x\in \dom \delta^D_w$ and $\delta^D_w(x)=0.$ Because $\delta^D_w(x_\lambda)=0$ for all $\lambda,$ we trivially have $\delta^D_w(x_\lambda)\overset{\text{WOT}}{\to} 0$.  By Theorem\autoref{propertiesofdelta_D}, the graph of $\delta^D_w$ is weak operator topology closed. Therefore, $x\in \dom \delta^D_w$ and $\delta^D_w(x)=0.$ We conclude $\ker \delta^D_w$ is a von Neumann algebra. 
\end{proof}

\begin{notation} \normalfont Let $\mathscr{P}_D$ denote the collection of all spectral projections for $D$ obtained through the spectral theorem for unbounded self-adjoint operators. Also, let $$\mathscr{M}_D:=\mathscr{P}_D''.$$ 
\end{notation}
We give further description of the structure $\ker \delta^D_w$ in terms of of $\mathscr{M}_D$ in the following lemma and proposition.
\begin{lem}\label{lem:projectioncommutator} Suppose $x\in B(H)$ satisfies $x(\dom D)\subseteq \dom D.$ If $P\in \mathscr{P}_D$, then $$[P,[D,x]]h=[D,[P,x]]h$$ for all $h\in \dom D.$
\end{lem}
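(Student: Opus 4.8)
The plan is to verify the identity by a direct algebraic manipulation on $\dom D$, using the fact that $P$ commutes with $D$ on $\dom D$. The key observation is that since $P \in \mathscr{P}_D$ is a spectral projection of $D$, we have $P(\dom D) \subseteq \dom D$ and $DPh = PDh$ for all $h \in \dom D$; this is standard from the spectral theorem (a bounded Borel function of $D$ commutes with $D$ on $\dom D$). Under the hypothesis $x(\dom D) \subseteq \dom D$, the commutator $[D,x]$ is a well-defined linear operator on $\dom D$, and likewise $[P,x]$ maps $\dom D$ into $\dom D$ (since both $P$ and $x$ preserve $\dom D$), so $[D,[P,x]]$ makes sense on $\dom D$; similarly $[D,x](\dom D)\subseteq H$ but we need $[P,[D,x]]h$ to make sense, which it does since $P$ is bounded and everything in sight is applied to $h \in \dom D$ where $Dxh$ and $xDh$ are defined.

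First I would expand both sides as sums of four terms each. On the left,
\[
[P,[D,x]]h = P D x h - P x D h - D x P h + x D P h,
\]
valid for $h \in \dom D$, noting $Ph \in \dom D$ so $xPh \in \dom D$ and $DxPh$ is defined, and $xh \in \dom D$ so $Dxh$ is defined. On the right,
\[
[D,[P,x]]h = D P x h - D x P h - P x D h + x P D h,
\]
where I use $[P,x]h = Pxh - xPh \in \dom D$ so that $D[P,x]h$ is defined. Subtracting, the terms $-PxDh$, $-DxPh$ appear on both sides and cancel, leaving
\[
\bigl([P,[D,x]] - [D,[P,x]]\bigr)h = PDxh + xDPh - DPxh - xPDh.
\]
Now I apply the commutation relation $DP = PD$ on $\dom D$: since $xh \in \dom D$, $DPxh = PDxh$, so the first and third terms cancel; and $xDPh - xPDh = x(DP - PD)h = 0$ since $h \in \dom D$. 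Hence the difference is zero, which is the claim.

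The main obstacle — really the only subtlety — is domain bookkeeping: one must be careful that every expression written down is actually defined, i.e. that the relevant vectors lie in $\dom D$ before $D$ is applied to them. This is handled entirely by the two facts that $x(\dom D)\subseteq \dom D$ (hypothesis) and $P(\dom D)\subseteq \dom D$ with $DP=PD$ on $\dom D$ (spectral theorem); with these in hand the computation is purely formal. I would state the spectral-theorem fact explicitly at the start of the proof and then carry out the four-term expansion and cancellation as above.
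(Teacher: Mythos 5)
Your proposal is correct and follows essentially the same route as the paper: establish from the spectral theorem that $P$ preserves $\dom D$ and commutes with $D$ there, then expand both iterated commutators into four terms and cancel. The only difference is that the paper spells out the commutation fact $DPh=PDh$ via the bounded Borel functional calculus with truncated identity functions, whereas you cite it as standard; your domain bookkeeping is otherwise exactly what the paper does.
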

\begin{proof} Let $B(\R)$ denote the bounded Borel functions on $\R$, and for each $R\in \R$, define id$_R:\R\to \R$ by id$_R(t)=t$ whenever $-R\leq t\leq R$ and id$_R(t)=0$ otherwise. The spectral theorem, stated as in Theorem 7.2.8 \cite{Simon}, provides a bounded Borel functional calculus for $D$, that is, a $*$-homomorphism $\Phi_D:B(\R)\to B(H)$ satisfying $\Phi_D(1)=I$, $$\dom D=\{h\in H : \lim\limits_{R\to \infty}\norm{\Phi_D(\text{id}_R)h}<\infty\},$$ and $$Dh=\lim\limits_{R\to \infty}\Phi_D(\text{id}_R)h$$ for all $h\in \dom D.$ We claim for each $P\in \mathscr{P}_D$, $P(\dom D)\subseteq \dom D$ and $PDh=DPh$ for all $h\in \dom D$. Given $P\in \mathscr{P}_D$, $P=\Phi_D(\chi_E)$ for some Borel set $E\subseteq \R.$ Note that (id$_R\cdot \chi_E)(t)=0$ if $t\not\in E\cap [-R,R]$, and otherwise (id$_R\cdot \chi_E)(t)=t.$ Thus, for any $h\in \dom D$, $$\lim\limits_{R\to\infty}\norm{\Phi_D(\text{id}_R)Ph}=\lim\limits_{R\to\infty}\norm{\Phi_D(\text{id}_R)\Phi_D(\chi_E)h}=\lim\limits_{R\to\infty}\norm{\Phi_D(\text{id}_R\cdot \chi_E)h}\leq \lim\limits_{R\to\infty}\norm{\Phi_D(\text{id}_R)h}<\infty.$$Therefore, $Ph\in \dom D$, and as $h\in \dom D$ was arbitrary, $P(\dom D)\subseteq \dom D$. Furthermore,
\begin{align*}
\norm{DPh-PDh}
&=\lim\limits_{R\to\infty}\norm{ \Phi_D(\text{id}_R)\Phi_D(\chi_E)h-\Phi_D(\chi_E)\Phi_D(\text{id}_R)h}\\
&=\lim\limits_{R\to\infty}\norm{ \Phi_D(\text{id}_R\cdot \chi_E)h-\Phi_D(\chi_E\cdot \text{id}_R)h}\\
&=\lim\limits_{R\to\infty}\norm{ \Phi_D(\text{id}_R\cdot \chi_E)h-\Phi_D(\text{id}_R\cdot\chi_E)h}\\
&=0.
\end{align*}
Given $x(\dom D)\subseteq \dom D$, for any $h\in \dom D$ we observe
\begin{align*} 
[P,[D,x]]h
&=P(Dx-xD)h-(Dx-xD)Ph\\
&=PDxh-PxDh-DxPh+xDPh\\
&=DPxh-PxDh-DxPh+xPDh\\
&=DPxh-DxPh+xPDh-PxDh\\
&=D(Px-xP)h+(xP-Px)Dh\\
&=D(Px-xP)h-(Px-xP)Dh\\
&=[D,[P,x]]h
\end{align*} Hence, $[P,[D,x]]h=[D,[P,x]]h$ for all $h\in \dom D$, and as $P\in \mathscr{P}_D$ was arbitrary, this equality holds for any spectral projection of $D$.
\end{proof}

\begin{prop}\label{prop:ker=M_D'} $\mathscr{M}_D\subseteq \ker \delta^D_w= \mathscr{M}_D'.$
\end{prop}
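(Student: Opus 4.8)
The plan is to establish the two claims $\mathscr{M}_D\subseteq \ker\delta^D_w$ and $\ker\delta^D_w=\mathscr{M}_D'$ in three stages. First, I would show $\mathscr{P}_D\subseteq \ker\delta^D_w$. Given a spectral projection $P\in\mathscr{P}_D$, Lemma\autoref{lem:projectioncommutator}'s proof already records that $P(\dom D)\subseteq\dom D$ and $PDh=DPh$ for all $h\in\dom D$; hence the commutator $[iD,P]$ is defined on $\dom D$ and is the zero operator there. By Theorem\autoref{weaklyd} (the equivalence (iv)$\Leftrightarrow$(i) together with the final assertion that $\delta^D_w(x)|_{\dom D}=i[D,x]$), it follows that $P\in\dom\delta^D_w$ and $\delta^D_w(P)=0$, i.e.\ $P\in\ker\delta^D_w$. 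Since $\ker\delta^D_w$ is a von Neumann algebra by Proposition\autoref{vN}, it is in particular strongly closed and closed under all von Neumann-algebraic operations, so it contains $\mathscr{P}_D''=\mathscr{M}_D$. This gives the inclusion $\mathscr{M}_D\subseteq\ker\delta^D_w$.

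Second, I would prove $\ker\delta^D_w\subseteq\mathscr{M}_D'$. Let $x\in\ker\delta^D_w$; then $[iD,x]$ extends the zero operator on $\dom D$, so in particular $[D,x]h=0$ for all $h\in\dom D$. Fix $P\in\mathscr{P}_D$. Applying Lemma\autoref{lem:projectioncommutator} (whose hypothesis $x(\dom D)\subseteq\dom D$ holds by Theorem\autoref{weaklyd}), we get $[D,[P,x]]h=[P,[D,x]]h=0$ for all $h\in\dom D$. Now I claim this forces $[P,x]=0$: the operator $y:=Px-xP$ is bounded, maps $\dom D$ into $\dom D$ (since both $P$ and $x$ do), and satisfies $[D,y]h=0$ on $\dom D$; by Theorem\autoref{weaklyd} again, $y\in\dom\delta^D_w$ with $\delta^D_w(y)=0$, so $y\in\ker\delta^D_w$. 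But I actually want $y=0$, not merely $y\in\ker\delta^D_w$. Here I would invoke the scalar-commutativity built into $\delta^D_w$: the argument needs that $[D,y]h=0$ on a core for $D$ together with self-adjointness of $D$ implies $y$ commutes with all the spectral projections of $D$ — and then iterating, that $y$ commutes with $P$ in particular. The cleanest route is: $[D,y]=0$ on $\dom D$ means $e^{itD}ye^{-itD}=y$ for all $t$ (from the $\frac{d}{dt}\langle\alpha_t(y)h,k\rangle=\langle\alpha_t(\delta^D_w(y))h,k\rangle=0$ statement in Theorem\autoref{weaklyd}, so $t\mapsto\langle\alpha_t(y)h,k\rangle$ is constant), hence $y$ commutes with every bounded Borel function of $D$, in particular with every projection in $\mathscr{P}_D$. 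Applying this with $y=[P,x]$ and pairing against vectors, one deduces $[P,x]=0$ — or more directly, the identity $e^{itD}xe^{-itD}=x$ for $x\in\ker\delta^D_w$ already shows $x$ commutes with every spectral projection of $D$, so $x\in\mathscr{M}_D'$ without needing the nested commutator at all.

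Third, for the reverse inclusion $\mathscr{M}_D'\subseteq\ker\delta^D_w$: if $x\in\mathscr{M}_D'$ then $x$ commutes with every $P\in\mathscr{P}_D$, hence with every $\Phi_D(\mathrm{id}_R)$, hence (taking the strong limit $R\to\infty$ on $\dom D$ as in Lemma\autoref{lem:projectioncommutator}) $x(\dom D)\subseteq\dom D$ and $xDh=Dxh$ for $h\in\dom D$, so $[iD,x]=0$ on $\dom D$ and Theorem\autoref{weaklyd} gives $x\in\ker\delta^D_w$. Combining the two inclusions yields $\ker\delta^D_w=\mathscr{M}_D'$, and together with Stage 1 this is the full statement. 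I expect the main subtlety to be the rigor of the passage ``$[D,x]=0$ on $\dom D$'' $\Rightarrow$ ``$x$ commutes with $\mathscr{P}_D$'': one must be careful to use a genuine core argument (via the $\alpha_t$-invariance coming from Theorem\autoref{weaklyd}, or via approximating $D$ by the bounded truncations $\Phi_D(\mathrm{id}_R)$) rather than manipulating the unbounded $D$ directly. Everything else is bookkeeping with the functional calculus and the von Neumann algebra structure from Proposition\autoref{vN}.
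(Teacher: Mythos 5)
Your proof is correct, and its skeleton --- first $\mathscr{P}_D\subseteq\ker\delta^D_w$ plus Proposition\autoref{vN} to promote this to $\mathscr{M}_D=\mathscr{P}_D''\subseteq\ker\delta^D_w$, then the two inclusions between $\ker\delta^D_w$ and $\mathscr{M}_D'$ --- matches the paper's exactly. The genuine difference is in how the latter two inclusions are discharged: in effect you swap the paper's techniques. For $\ker\delta^D_w\subseteq\mathscr{M}_D'$ the paper quotes Theorem X.4.11 of \cite{Conway} (that $x(\dom D)\subseteq\dom D$ and $[D,x]=0$ on $\dom D$ force $x$ to commute with the Borel functional calculus), whereas you derive $\alpha_t(x)=x$ from $\frac{d}{dt}\lan\alpha_t(x)h,k\ran=\lan\alpha_t(\delta^D_w(x))h,k\ran=0$ in Theorem\autoref{weaklyd} and then use $\{e^{itD}:t\in\R\}''=\mathscr{M}_D$; this stays entirely inside Christensen's framework and avoids the external citation, at the cost of invoking the (standard) fact that the spectral projections lie in the von Neumann algebra generated by the unitary group. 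For $\mathscr{M}_D'\subseteq\ker\delta^D_w$ the paper uses exactly the $\alpha_t$-invariance trick you used for the other inclusion ($e^{itD}\in\mathscr{M}_D$ gives $\alpha_t(x)=x$, so the matrix coefficients are constant), while you instead verify $x(\dom D)\subseteq\dom D$ and $xDh=Dxh$ by hand via the truncations $\Phi_D(\mathrm{id}_R)$; both are valid, the paper's version being a one-liner. Finally, your first, abandoned attempt at the middle inclusion (via Lemma\autoref{lem:projectioncommutator} and nested commutators) was rightly discarded: it only yields $[P,x]\in\ker\delta^D_w$, which cannot be converted to $[P,x]=0$ at that stage --- that nested-commutator-plus-corner-decomposition argument is precisely what the paper reserves for Theorem\autoref{thm:kerD}, where Proposition\autoref{prop:ker=M_D'} is already available to close the loop.
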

\begin{proof} Let $P\in \mathscr{P}_D.$ By the previous lemma, $[D,P]=0$ on $\dom D$, so $P\in \dom \delta^D_w$ by Theorem\autoref{weaklyd}. Moreover, $\delta^D_w(P)$ is the bounded extension of $i(DP-PD)$ to all of $H$, which is 0. Therefore, $P\in \ker\delta^D_w.$ Proposition\autoref{vN} implies $\mathscr{M}_D\subseteq \ker \delta^D_w$.

 Let $x\in \ker \delta^D_w.$ By Theorem\autoref{ntimes}, $x (\dom D)\subseteq \dom D$ and $\delta^D_w(x)|_{\dom D}=[D,x]|_{\dom D}=0.$ Then, by Theorem X.4.11 \cite{Conway}, $xf(D)\subseteq f(D)x$ for any $f \in B(\R)$. In particular, when $f=\chi_E$ for some Borel subset $E\subseteq \R$ and $P$ denotes the corresponding spectral projection for $D$, $ xP=Px$. Hence, $x$ commutes with all projections in $\mathscr{P}_D$, and as $\mathscr{M}_D$ is generated as a von Neumann algebra by these projections, it follows that $x\in \mathscr{M}_D^{'}.$

Let $x\in \mathscr{M}_D'.$ For each $t\in \R$, $e^{itD}\in \mathscr{M}_D$. Thus, $\alpha_t(x)=e^{itD}xe^{-itD}=x$ for all $t \in \R$. In particular, for any $h,k\in H,$ the function $t\mapsto\lan \alpha_t(x)h,k\ran=\lan xh,k\ran$ is constant, and thus is continuously differentiable with derivative 0. Therefore, $x\in \ker\delta^D_w.$
\end{proof}
We now present our main result.

\begin{thm}\label{thm:kerD} If $D$ is any self-adjoint operator on a Hilbert space $H$, then for every $n\in \N,$ $$\ker (\delta^{D}_w)^n=\ker \delta^{D}_w.$$
\end{thm}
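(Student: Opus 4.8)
Since $\ker\delta^D_w\subseteq\ker(\delta^D_w)^n$ is immediate (if $\delta^D_w(x)=0$ then certainly $x\in\dom(\delta^D_w)^n$ with $(\delta^D_w)^n(x)=0$), the content is the reverse inclusion: if $x\in\dom(\delta^D_w)^n$ and $(\delta^D_w)^n(x)=0$, then $\delta^D_w(x)=0$. By Proposition\autoref{prop:ker=M_D'} this is equivalent to showing $x\in\mathscr{M}_D'$, i.e. that $x$ commutes with every spectral projection $P\in\mathscr{P}_D$. So the goal reduces to a statement about commutation with projections.

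First I would treat the base case $n=2$; the general case should follow by an induction that peels off one derivative at a time. So suppose $x\in\dom(\delta^D_w)^2$ with $(\delta^D_w)^2(x)=0$; equivalently, by Proposition\autoref{prop3.3}, $d^2(x)=[iD,[iD,x]]$ vanishes on $\dom D^2$. Let $y:=\delta^D_w(x)$, so $y\in\ker\delta^D_w=\mathscr{M}_D'$. The aim is to show $y=0$. The key device is Lemma\autoref{lem:projectioncommutator}: for any $P\in\mathscr{P}_D$ and $h\in\dom D$,
$$[P,[D,x]]h=[D,[P,x]]h.$$
Since $y=\delta^D_w(x)$ is the bounded extension of $i[D,x]$, the left-hand side is (up to the factor $i$) $[P,y]h=0$ because $y\in\mathscr{M}_D'$; hence $[D,[P,x]]h=0$ for all $h\in\dom D$. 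In other words, $[P,x]$ lies in $\dom\delta^D_w$ with $\delta^D_w([P,x])=0$, so $[P,x]\in\mathscr{M}_D'$ as well. The strategy is now to exploit this: set $q:=[P,x]$; then $q\in\mathscr{M}_D'$ means $qP=Pq$, so $PxP-Px^?$... more precisely $Pq=qP$ gives $P[P,x]=[P,x]P$, i.e. $PxP-PPx = PxP - xPP$... using $P^2=P$ this is $PxP-Px=PxP-xP$, hence $Px=xP$. That is exactly $x\in\mathscr{M}_D'$, so $\delta^D_w(x)=0$ and the $n=2$ case is done.

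For general $n$, I would induct: assume the result for $n-1$ and let $x\in\dom(\delta^D_w)^n$ with $(\delta^D_w)^n(x)=0$. Then $\delta^D_w(x)\in\dom(\delta^D_w)^{n-1}$ with $(\delta^D_w)^{n-1}(\delta^D_w(x))=0$, so by the inductive hypothesis $\delta^D_w(x)\in\ker\delta^D_w=\mathscr{M}_D'$. Now run exactly the argument above — the only property of $y=\delta^D_w(x)$ used was that it belongs to $\mathscr{M}_D'$ — to conclude $x\in\mathscr{M}_D'$, hence $\delta^D_w(x)=0$. The main obstacle is making the bounded-extension bookkeeping in Lemma\autoref{lem:projectioncommutator} rigorous: the identities there hold only on $\dom D$, and one must be careful that $[D,x]$ on $\dom D$ genuinely agrees with the bounded operator $-i\delta^D_w(x)$ (this is the last line of Theorem\autoref{weaklyd}) and that $[P,x]$'s membership in $\dom\delta^D_w$ really follows from $[D,[P,x]]=0$ on the core $\dom D$ via Theorem\autoref{weaklyd}(v). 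Once those identifications are in place, the projection-commutation collapse $Px=xP$ is a one-line computation using $P^2=P$.
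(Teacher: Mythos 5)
Your proposal is correct and follows essentially the same route as the paper: reduce to showing $x\in\mathscr{M}_D'$, use Lemma\autoref{lem:projectioncommutator} together with $\delta^D_w(x)\in\mathscr{M}_D'$ to get $[P,x]\in\ker\delta^D_w=\mathscr{M}_D'$, and then observe that a commutator $[P,x]$ which itself commutes with $P$ must vanish, with the general case by induction. The only quibble is the final algebra: from $P[P,x]=[P,x]P$ one gets $Px-PxP=PxP-xP$, and you must multiply once more by $P$ (or, as the paper does, expand $(P+P^\perp)[P,x](P+P^\perp)$) to conclude $Px=PxP=xP$; your displayed intermediate identity has a sign slip, though the conclusion is right.
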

\begin{proof} We first show $\ker (\delta^{D}_w)^2=\ker \delta^{D}_w.$ The inclusion $\ker \delta^{D}_w\subseteq \ker(\delta^D_w)^2$ is clear. Let $x\in \ker(\delta^{D}_w)^2.$ Proposition\autoref{prop:ker=M_D'} states $\ker \delta^{D}_w=\mathscr{M}_D'.$ Thus, it suffices to prove $x\in \mathscr{M}_D',$ which holds if and only if $[P,x]=0$ for every $P\in \mathscr{P}_D.$ By Proposition\autoref{prop3.3}, if $x\in \dom (\delta^D_w)^2$, then $x(\dom D)\subseteq \dom D$, $\delta^D_w(x)(\dom D)\subseteq \dom D$, and $(\delta^D_w)^2(x)|_{\dom D}=[iD,\delta^D_w(x)].$ Since $(\delta^D_w)^2(x)=0,$ it must be that $[iD,\delta^D_w(x)]=0.$ Thus, Theorem X.4.11 of \cite{Conway} implies $\delta^D_w(x)$ commutes with the bounded Borel functional calculus for $D,$ so, in particular, $[P,\delta^D_w(x)]=0$ for every $P\in \mathscr{P}_D.$ Because $\delta^D_w(x)$ and $P$ both preserve the domain of $D,$  so does the commutator $[P,\delta^D_w(x)]$. Thus, Lemma \autoref{lem:projectioncommutator} implies $$0=[P,\delta^D_w(x)]|_{\dom D}=[P,[iD,x]]|_{\dom D}=[iD,[P,x]]|_{\dom D}.$$ As $[P,x]\in B(H)$, $[P,x](\dom D)\subseteq \dom D,$ and $[iD,[P,x]]$ is bounded on the domain of $D,$ Theorem\autoref{ntimes} implies $[P,x]\in \ker\delta^D_w.$ Hence, by Proposition\autoref{prop:ker=M_D'}, $[P,x]\in \mathscr{M}_D'.$ Therefore, 
\begin{align*}
[P,x]
&=(P+P^{\perp})[P,x](P+P^{\perp})\\
&=P[P,x]P+P[P,x]P^{\perp}+P^{\perp}[P,x]P+P^{\perp}[P,x]P^{\perp}\\
&=P[P,x]P+PP^{\perp}[P,x]+P^{\perp}P[P,x]+P^\perp[P,x]P^\perp\\
&=P(Px-xP)P+0+0+P^\perp(Px-xP)P^\perp\\
&=PxP-PxP+0+0+0\\
&=0.
\end{align*}
As $P\in \mathscr{P}_D$ was arbitrary, $x\in \mathscr{M}_D'.$ By Proposition\autoref{prop:ker=M_D'}, $x\in \ker \delta^{D}_w.$

We proceed by induction on $n$. The case when $n=1$ is vacuous. Suppose $\ker(\delta^D_w)^k=\ker \delta^D_w$ for some $k\in \N$. Let $x\in \ker (\delta^{D}_w)^{k+1}.$ Then $\delta^D_w(x)\in \ker (\delta^D_w)^k$, which equals $\ker\delta^D_w$ by the inductive hypothesis. Hence, $x\in \ker(\delta^D_w)^2.$ Since we have already shown $\ker(\delta^D_w)^2=\ker \delta^D_w$, we have $x\in \ker\delta^D_w.$ Therefore, $\ker(\delta^D_w)^n=\ker \delta^D_w$ for all $n\in \N$.
\end{proof}

\section{\label{sec:level1}Applications of Theorem\autoref{thm:kerD}}

The first application of Theorem\autoref{thm:kerD} is in the context of Theorem\autoref{B-R}. We first define the derivation $\delta^D_u$, which is simply the infinitesimal generator $\widetilde{\delta}$ for the one-parameter automorphism group given by $\alpha_t(x):=e^{itD}xe^{-itD}$ for each $t\in \R$. \begin{defn}\label{uniformDdifferentiability} \normalfont An operator $x\in B(H)$ is \textit{uniformly $D$-differentiable} if there exists $y\in B(H)$ such that $$\lim\limits_{t\to 0}\norm{\frac{\alpha_t(x)-x}{t}-y}=0.$$ We denote this by $x\in \dom \delta^D_u$ and $\delta^D_u(x)=y.$
\end{defn}
\vspace{0.3cm}
\begin{prop}\label{keru=kerw} $\ker \delta^D_u=\ker \delta^D_w.$
\begin{proof} Theorem 4.1 \cite{Christensen1} states $x\in \dom \delta^D_u$ if and only if $x\in \dom \delta^D_w$ and $t\mapsto \alpha_t(\delta^D_w(x))$ is norm continuous. Moreover, $\delta^D_w$ extends $\delta^D_u$. Thus, $\ker \delta^D_u\subseteq \ker \delta^D_w$.

Let $x\in \ker \delta^D_w.$ Then $t\mapsto\alpha_t(\delta^D_w(x))=0$ is norm continuous, and hence, $x\in \dom \delta^D_u.$ Moreover, $\delta^D_u(x)=\delta^D_w|_{\dom \delta^D_u}(x)=0.$ Therefore, $x\in \ker \delta^D_u.$
\end{proof}
\end{prop}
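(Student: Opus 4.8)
The plan is to deduce both inclusions from Christensen's comparison of uniform and weak $D$-differentiability, Theorem 4.1 of \cite{Christensen1}: an operator $x$ lies in $\dom\delta^D_u$ exactly when $x\in\dom\delta^D_w$ \emph{and} the orbit map $t\mapsto\alpha_t(\delta^D_w(x))$ is norm continuous, and in that case $\delta^D_u(x)=\delta^D_w(x)$; in particular $\delta^D_w$ extends $\delta^D_u$. Granting this, the inclusion $\ker\delta^D_u\subseteq\ker\delta^D_w$ is purely formal: if $x\in\ker\delta^D_u$, then $x\in\dom\delta^D_u\subseteq\dom\delta^D_w$ and $\delta^D_w(x)=\delta^D_u(x)=0$, so $x\in\ker\delta^D_w$.

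For the reverse inclusion I would start from $x\in\ker\delta^D_w$ and note that $\delta^D_w(x)=0$ makes the orbit map $t\mapsto\alpha_t(\delta^D_w(x))=\alpha_t(0)=0$ constant, hence norm continuous; Christensen's criterion then places $x$ in $\dom\delta^D_u$, and the agreement $\delta^D_u(x)=\delta^D_w(x)=0$ shows $x\in\ker\delta^D_u$. An alternative route for this half, not requiring Theorem 4.1 of \cite{Christensen1}, is to argue directly: by the last identity in Theorem\autoref{weaklyd}, $\frac{d}{dt}\lan\alpha_t(x)h,k\ran=\lan\alpha_t(\delta^D_w(x))h,k\ran=0$ for all $h,k\in H$, so each matrix coefficient of $\alpha_t(x)$ is constant in $t$, forcing $\alpha_t(x)=x$ for every $t$; then the difference quotient $\frac{\alpha_t(x)-x}{t}$ is identically $0$ and converges to $0$ in norm, so $x\in\dom\delta^D_u$ with $\delta^D_u(x)=0$.

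I do not anticipate a genuine obstacle here: the statement amounts to the observation that two infinitesimal generators differing only in the topology used for the difference quotient cannot be separated on their common kernel, and all the analytic content is already packaged in the cited equivalence. The one thing to be careful about is to use Christensen's result with the clause that $\delta^D_w$ \emph{extends} $\delta^D_u$ — so that the value $\delta^D_u(x)$ is forced to equal $\delta^D_w(x)$ on the smaller domain — rather than as a mere inclusion of domains; the direct argument above sidesteps even this point.
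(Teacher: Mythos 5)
Your proposal is correct and follows essentially the same route as the paper: both directions are obtained from Theorem 4.1 of \cite{Christensen1}, with the reverse inclusion coming from the observation that $t\mapsto\alpha_t(\delta^D_w(x))=0$ is trivially norm continuous. The alternative direct argument you sketch (showing $\alpha_t(x)=x$ from the vanishing derivative of the matrix coefficients) is a nice bonus but not needed.
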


\begin{cor}\label{stabilizationofkeru} For all $n\in \N,$ $\ker (\delta^D_u)^n=\ker \delta^D_u$. 
\begin{proof} Fix $n>1$ and let $x\in \ker(\delta^D_u)^n.$ Then $(\delta^D_u)^{n-1}(x)\in \dom \delta^D_u.$ Hence, $(\delta^D_u)^{n-1}(x)\in \dom \delta^D_w.$ Further, as $x\in \dom \delta^D_u,$ we have $x\in \dom \delta^D_w$ and $\delta^D_w(x)=\delta^D_u(x).$ Hence, $x\in \dom (\delta^D_w)^n$ and $(\delta^D_w)^n(x)=(\delta^D_u)^n(x)=0.$ By Theorem\autoref{thm:kerD}, $x\in \ker \delta^D_w.$ By Proposition\autoref{keru=kerw}, $x\in \ker \delta^D_u.$ 
\end{proof}
\end{cor}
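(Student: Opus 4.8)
The plan is to transfer the statement about the uniform derivation $\delta^D_u$ to the corresponding statement about the weak derivation $\delta^D_w$, which is exactly Theorem\autoref{thm:kerD}. Concretely, the whole argument will be a closed chain of inclusions and equalities
\[
\ker\delta^D_u\ \subseteq\ \ker(\delta^D_u)^n\ \subseteq\ \ker(\delta^D_w)^n\ =\ \ker\delta^D_w\ =\ \ker\delta^D_u ,
\]
which, since it begins and ends at $\ker\delta^D_u$, forces every link to be an equality and in particular yields $\ker(\delta^D_u)^n=\ker\delta^D_u$.

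The first inclusion is routine: if $\delta^D_u(x)=0$, then $x\in\dom\delta^D_u$ while every higher iterate $(\delta^D_u)^k(x)$ with $k\ge 1$ equals $0\in\dom\delta^D_u$, so $x\in\dom(\delta^D_u)^n$ with $(\delta^D_u)^n(x)=0$. For the second inclusion I would invoke Theorem~4.1 of \cite{Christensen1} (already used in the proof of Proposition\autoref{keru=kerw}): $\dom\delta^D_u\subseteq\dom\delta^D_w$ and $\delta^D_w$ extends $\delta^D_u$. A short induction on $k$ then shows that $x\in\dom(\delta^D_u)^n$ implies $x\in\dom(\delta^D_w)^n$ with $(\delta^D_w)^k(x)=(\delta^D_u)^k(x)$ for every $k\le n$: the base case is $\delta^D_w(x)=\delta^D_u(x)$, and the inductive step applies $\delta^D_w$ to $(\delta^D_u)^k(x)\in\dom\delta^D_u\subseteq\dom\delta^D_w$, where $\delta^D_w$ and $\delta^D_u$ agree. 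In particular $(\delta^D_w)^n(x)=(\delta^D_u)^n(x)$, so any kernel element of $(\delta^D_u)^n$ is a kernel element of $(\delta^D_w)^n$. The last two links of the chain are then Theorem\autoref{thm:kerD} and Proposition\autoref{keru=kerw}, respectively.

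I do not expect a genuine obstacle here: all the analytic content was already spent on Theorem\autoref{thm:kerD}, and what remains is purely the domain bookkeeping in the second inclusion. If one wishes to avoid even that, an alternative is a direct induction mirroring the proof of Theorem\autoref{thm:kerD}: from $x\in\ker(\delta^D_u)^{k+1}$ one gets $\delta^D_u(x)\in\ker(\delta^D_u)^k=\ker\delta^D_u$ by the inductive hypothesis, hence $x\in\ker(\delta^D_u)^2$, so that only the base case $\ker(\delta^D_u)^2=\ker\delta^D_u$ is needed — and that case is again handled by passing to $\delta^D_w$ and combining Theorem\autoref{thm:kerD} with Proposition\autoref{keru=kerw}.
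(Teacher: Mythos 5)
Your proposal is correct and follows essentially the same route as the paper: transfer $x$ from $\ker(\delta^D_u)^n$ into $\ker(\delta^D_w)^n$ using the fact that $\delta^D_w$ extends $\delta^D_u$, apply Theorem\autoref{thm:kerD}, and return via Proposition\autoref{keru=kerw}. The only difference is presentational — you make explicit the induction showing $(\delta^D_w)^k(x)=(\delta^D_u)^k(x)$ for $k\le n$, which the paper's proof leaves implicit.
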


Given a self-adjoint operator $D$, our proof of kernel stabilization of $\delta^D_w$ relied on the relationship between $\delta^D_w$ and commutation with $D$. Intuitively, then, kernel stabilization is likely to occur for a derivation $\delta$ on an abstract $C^*$-algebra that can be implemented, under an appropriate representation, as commutation with a self-adjoint operator. Bratteli and Robinson provide sufficient conditions for when a derivation on a $C^*$-algebra has such a representation.

Under this representation $\pi$, Bratteli and Robinson construct an essentially self-adjoint operator $S$ which implements the derivation's action as commutation with $S$. Once this essentially self-adjoint operator is in play, we use its self-adjoint closure $D=\overline{S}$ to generate the weak-$D$ derivation $\delta^D_w.$ We show $\delta^D_w$ extends $\delta\circ \pi$ and apply Theorem\autoref{thm:kerD} (kernel stabilization of $\delta^D_w$) to obtain kernel stabilization of $\delta.$

\begin{defn}\label{infgen} \normalfont Given a one-parameter group $\{\alpha_t\}_{t\in \R}$ of maps on $B(H),$ let $\dom \widetilde{\delta}$ be the set of all $x\in B(H)$ so that there exists $y\in B(H)$ satisfying $$\lim\limits_{t\to 0}\norm{\frac{\alpha_t(x)-x}{t}-y}=0.$$ For $x\in \dom\widetilde{\delta},$ let $\widetilde{\delta}(x)=y$ where $y$ is the uniform limit described above. We call $\widetilde{\delta}$ the \textit{infinitesimal generator} for $\{\alpha_t\}_{t\in \R}$.
\end{defn}
\vspace{0.3cm}
\textit{Remark.} When $\alpha_t(x):=e^{itD}xe^{-itD}$ for some self-adjoint operator $D$, Definition\autoref{infgen} is identical to the derivation $\delta^D_u$ in Definition\autoref{uniformDdifferentiability}. 

\begin{defn}\normalfont Let $T$ be a linear operator on a Banach space $X$. A vector $a\in X$ is an \textit{analytic vector for T} if $$a\in \bigcap\limits_{k\in \N} \dom T^k$$ and there exists $t>0$ such that the following series converges: $$\sum_{k=0}^\infty \frac{t^k}{k!} \norm{T^ka}.$$ We denote the set of analytic vectors for $T$ by $X^\infty_T$ or simply $X^\infty$ when $T$ is clear from context.
\end{defn}

\begin{lem}\label{kerofdelta} If $\delta$, $\mathscr{A}$, $\pi$, and $\widetilde{\delta}$ are as in Theorem\autoref{B-R}, then $$\ker \widetilde{\delta}^n\cap \pi(\mathscr{A}^\infty)=\pi(\ker \delta^n)$$ for all $n\in \N$. 
\end{lem}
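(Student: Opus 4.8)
The plan is to establish the set equality $\ker \widetilde{\delta}^n \cap \pi(\mathscr{A}^\infty) = \pi(\ker \delta^n)$ by a direct double inclusion, using the fact (from Theorem~\autoref{B-R}) that $\widetilde{\delta}$ is the closure of $\pi \circ \delta|_{\mathscr{A}^\infty}$ together with faithfulness of $\pi$. First I would record the key structural observations: since $\pi$ is faithful, it is an isometric $*$-isomorphism onto its image, so $\pi$ intertwines $\delta$ and $\widetilde{\delta}$ on analytic vectors, i.e.\ $\widetilde{\delta}(\pi(a)) = \pi(\delta(a))$ for $a \in \mathscr{A}^\infty$; and $\mathscr{A}^\infty$ is a $\delta$-invariant subalgebra (analytic vectors for a derivation are closed under $\delta$), so that $\pi(\mathscr{A}^\infty)$ is $\widetilde{\delta}$-invariant and the iterates $\widetilde{\delta}^k(\pi(a)) = \pi(\delta^k(a))$ make sense for all $k$ and all $a \in \mathscr{A}^\infty$.

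For the inclusion $\pi(\ker \delta^n) \subseteq \ker \widetilde{\delta}^n \cap \pi(\mathscr{A}^\infty)$, I would first argue that $\ker \delta^n \subseteq \mathscr{A}^\infty$: if $\delta^n(a) = 0$ then $\delta^{n+j}(a) = 0$ for all $j \geq 0$, so the analytic-vector series $\sum_k \frac{t^k}{k!}\|\delta^k(a)\|$ has only finitely many nonzero terms and converges for every $t$, whence $a \in \mathscr{A}^\infty$. Then for such $a$, applying the intertwining relation $n$ times gives $\widetilde{\delta}^n(\pi(a)) = \pi(\delta^n(a)) = \pi(0) = 0$, so $\pi(a) \in \ker\widetilde{\delta}^n$, and trivially $\pi(a) \in \pi(\mathscr{A}^\infty)$. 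For the reverse inclusion, suppose $y = \pi(a) \in \ker \widetilde{\delta}^n$ with $a \in \mathscr{A}^\infty$. Since $a$ is analytic, all iterates $\delta^k(a)$ exist and $\widetilde{\delta}^n(\pi(a)) = \pi(\delta^n(a))$; thus $\pi(\delta^n(a)) = 0$, and faithfulness (injectivity) of $\pi$ forces $\delta^n(a) = 0$, i.e.\ $a \in \ker \delta^n$ and $y = \pi(a) \in \pi(\ker \delta^n)$.

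The step requiring the most care is justifying the identity $\widetilde{\delta}^n(\pi(a)) = \pi(\delta^n(a))$ for $a \in \mathscr{A}^\infty$ and its domain bookkeeping: one must check that $\pi(a)$ genuinely lies in $\dom \widetilde{\delta}^n$, not merely that a formal commutator vanishes. This is where analyticity of $a$ is essential — it guarantees that $t \mapsto \alpha_t(\pi(a)) = \pi(\alpha_t^{\mathscr{A}}(a))$ (where $\alpha_t^{\mathscr{A}}$ is the automorphism group generated by $\delta$ on $\mathscr{A}^\infty$, whose uniform generator is $\delta$ there) is analytic in $t$ in the norm topology, so all norm-derivatives exist and are computed termwise, giving $\frac{d^k}{dt^k}\big|_{t=0}\alpha_t(\pi(a)) = \pi(\delta^k(a))$. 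Invoking that $\pi$ is isometric lets one transfer the uniform-limit definition of $\widetilde\delta$ in Definition~\autoref{infgen} between $\mathscr{A}$ and $\pi(\mathscr{A})$ cleanly. Once this intertwining with correct domains is in hand, both inclusions are immediate, and no appeal to Theorem~\autoref{thm:kerD} is needed for this lemma — it is purely a faithful-representation transfer statement, and kernel stabilization enters only afterward when combining this lemma with Corollary~\autoref{stabilizationofkeru}.
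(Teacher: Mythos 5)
Your proposal is correct and follows essentially the same route as the paper: the intertwining identity $\widetilde{\delta}^n(\pi(a))=\pi(\delta^n(a))$ on analytic vectors from Theorem\autoref{B-R}, faithfulness of $\pi$ for one inclusion, and the observation that $\ker\delta^n\subseteq\mathscr{A}^\infty$ (since the analytic-vector series terminates) for the other. Your extra care about the domain bookkeeping for $\pi(a)\in\dom\widetilde{\delta}^n$ is a welcome elaboration of a step the paper passes over with ``it follows by analyticity,'' but it is the same argument.
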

\begin{proof} Recall if $a\in \mathscr{A}^\infty,$ then Theorem\autoref{B-R} provides $\widetilde{\delta}(\pi(a))=\pi(\delta(a))$. It follows by analyticity of $a$ that $\widetilde{\delta}^n(\pi(a))=\pi(\delta^n(a))$ for every $n\in \N$. Suppose $\widetilde{\delta}^n(\pi(a))=0.$ Then $\pi(\delta^n(a))=\widetilde{\delta}^n(\pi(a))=0,$ and since $\pi$ is faithful, $\delta^n(a)=0.$ Therefore, $\pi(a)\in \pi(\ker\delta^n).$

Conversely, suppose $a\in \ker\delta^n$. Then $a\in \mathscr{A}^\infty$ because $\delta^j(a)=0$ for all $j\geq n$ and $\sum_{k=0}^\infty \frac{t^k}{k!}\norm{\delta^k(a)}=\sum_{k=0}^{n-1}\frac{t^k}{k!}\norm{\delta^k(a)}<\infty$ for any choice of $t>0.$ Similar to above, $\widetilde{\delta}^n(\pi(a))=\pi(\delta^n(a))=\pi(0)=0.$ Therefore, $\pi(a)\in \ker \widetilde{\delta}^n\cap \pi(\mathscr{A}^\infty).$ The desired equality holds for all $n\in \N.$
\end{proof}

\begin{thm}\label{stabilizationofkerdelta} If $\delta$, $\mathscr{A}$, $\pi$, $\widetilde{\delta}$, and $S$ are as in Theorem\autoref{B-R}, then $\ker \delta^n=\ker \delta.$
\end{thm}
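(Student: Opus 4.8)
The plan is to reduce kernel stabilization of the abstract derivation $\delta$ to kernel stabilization of the weakly-defined derivation $\delta^D_w$ that we already proved in Theorem~\autoref{thm:kerD}. Set $D=\overline{S}$, the self-adjoint closure of the symmetric operator $S$ furnished by Theorem~\autoref{B-R}, and let $\alpha_t(x)=e^{itD}xe^{-itD}$ be the associated one-parameter automorphism group on $B(H)$. By the Remark following Definition~\autoref{infgen}, the infinitesimal generator $\widetilde{\delta}$ of this group coincides with $\delta^D_u$, and by Proposition~\autoref{keru=kerw} we have $\ker\widetilde{\delta}=\ker\delta^D_u=\ker\delta^D_w$. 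Corollary~\autoref{stabilizationofkeru} then gives kernel stabilization for $\widetilde{\delta}$ itself: $\ker\widetilde{\delta}^n=\ker\widetilde{\delta}$ for all $n\in\N$. So the entire weak/uniform machinery is already packaged into a statement about $\widetilde{\delta}$, and what remains is to transport it across the faithful representation $\pi$.

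The key step is Lemma~\autoref{kerofdelta}, which says $\ker\widetilde{\delta}^n\cap\pi(\mathscr{A}^\infty)=\pi(\ker\delta^n)$ for every $n$. I would argue as follows. First fix $n\in\N$ and take $a\in\ker\delta^n$. By Lemma~\autoref{kerofdelta}, $\pi(a)\in\ker\widetilde{\delta}^n$, and by Corollary~\autoref{stabilizationofkeru} this means $\pi(a)\in\ker\widetilde{\delta}$. But $a\in\ker\delta^n\subseteq\mathscr{A}^\infty$ (noted in the proof of Lemma~\autoref{kerofdelta}), so $\pi(a)\in\ker\widetilde{\delta}\cap\pi(\mathscr{A}^\infty)=\pi(\ker\delta^1)=\pi(\ker\delta)$ by Lemma~\autoref{kerofdelta} with $n=1$. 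Since $\pi$ is faithful (hence injective), $a\in\ker\delta$. This gives $\ker\delta^n\subseteq\ker\delta$; the reverse inclusion $\ker\delta\subseteq\ker\delta^n$ is immediate since $\delta(a)=0$ forces $\delta^n(a)=0$. Therefore $\ker\delta^n=\ker\delta$.

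The one point that needs care — and the place where the hypotheses of Theorem~\autoref{B-R} are genuinely used — is the density of analytic vectors. Lemma~\autoref{kerofdelta} and the identity $\widetilde{\delta}(\pi(a))=\pi(\delta(a))$ for $a\in\mathscr{A}^\infty$ rest on $S$ being \emph{essentially} self-adjoint and on $\widetilde{\delta}$ being the closure of $\pi\circ\delta|_{\mathscr{A}^\infty}$, which Theorem~\autoref{B-R} only guarantees when $\mathscr{A}^\infty$ is dense in $\mathscr{A}$. Fortunately the only elements of $\mathscr{A}$ I ever feed into $\pi$ in the argument above lie in $\ker\delta^n\subseteq\mathscr{A}^\infty$, so I stay inside the analytic domain throughout and never need to approximate a general element of $\mathscr{A}$. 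I do not expect a substantive obstacle: once Lemma~\autoref{kerofdelta} and Corollary~\autoref{stabilizationofkeru} are in hand, this theorem is a short chase through injectivity of $\pi$ and the observation $\ker\delta^n\subseteq\mathscr{A}^\infty$. The proof should be only a few lines.

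\begin{proof}
Let $n\in\N$. The inclusion $\ker\delta\subseteq\ker\delta^n$ is immediate: if $\delta(a)=0$ then $\delta^n(a)=\delta^{n-1}(\delta(a))=0$.

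For the reverse inclusion, let $a\in\ker\delta^n$. As observed in the proof of Lemma~\autoref{kerofdelta}, $\delta^j(a)=0$ for all $j\geq n$, so $\sum_{k=0}^\infty\frac{t^k}{k!}\norm{\delta^k(a)}=\sum_{k=0}^{n-1}\frac{t^k}{k!}\norm{\delta^k(a)}<\infty$ for every $t>0$; hence $a\in\mathscr{A}^\infty$. By Lemma~\autoref{kerofdelta}, $\pi(a)\in\ker\widetilde{\delta}^n\cap\pi(\mathscr{A}^\infty)\subseteq\ker\widetilde{\delta}^n$.

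By the Remark following Definition~\autoref{infgen}, $\widetilde{\delta}=\delta^D_u$ where $D=\overline{S}$, and by Proposition~\autoref{keru=kerw}, $\ker\delta^D_u=\ker\delta^D_w$. Corollary~\autoref{stabilizationofkeru} gives $\ker(\delta^D_u)^n=\ker\delta^D_u$, that is, $\ker\widetilde{\delta}^n=\ker\widetilde{\delta}$. Therefore $\pi(a)\in\ker\widetilde{\delta}$. Since $a\in\mathscr{A}^\infty$, we have $\pi(a)\in\ker\widetilde{\delta}\cap\pi(\mathscr{A}^\infty)=\pi(\ker\delta)$ by Lemma~\autoref{kerofdelta} applied with $n=1$. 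Thus $\pi(a)=\pi(b)$ for some $b\in\ker\delta$, and as $\pi$ is faithful, $a=b\in\ker\delta$.

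Hence $\ker\delta^n\subseteq\ker\delta$, and combined with the first paragraph, $\ker\delta^n=\ker\delta$.
\end{proof}
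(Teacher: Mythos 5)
Your proof is correct and follows essentially the same route as the paper's: reduce to $\widetilde{\delta}=\delta^D_u$ via Lemma~\autoref{kerofdelta}, invoke Corollary~\autoref{stabilizationofkeru} for kernel stabilization of $\widetilde{\delta}$, and pull back through the faithful representation $\pi$ with a second application of Lemma~\autoref{kerofdelta}. Your version is slightly more explicit about the trivial inclusion $\ker\delta\subseteq\ker\delta^n$ and the use of injectivity of $\pi$, but the argument is the same.
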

\begin{proof} Fix $n\in \N$, and let $a\in \ker \delta^n$. Then, $a\in \mathscr{A}^\infty$ and $\pi(a)\in \ker \widetilde{\delta}^n$ by Lemma\autoref{kerofdelta}. Note $\widetilde{\delta}=\delta^D_u$ where $D=\overline{S}$, so Proposition\autoref{stabilizationofkeru} implies $\ker\widetilde{\delta}^n=\ker\widetilde{\delta}$ for all $n\in \N$. Hence, $\pi(a)\in \ker \widetilde{\delta}\cap \pi(\mathscr{A}^\infty)$. By another application of Lemma\autoref{kerofdelta}, we get $a\in \ker \delta.$ Therefore, $\ker \delta^n=\ker \delta$ for all $n\in \N$.
\end{proof}

The second application of Theorem\autoref{thm:kerD} is related to the Heisenberg Commutation Relation, defined in Definition\autoref{HCR}.
\begin{exmp}\label{Spair} \normalfont The classical example of a pair satisfying the Heisenberg Commutation Relation is the \textit{Schr\"odinger pair}, the quantum mechanical position operator $Q$ and momentum operator $P$ on $L^2(\R)$. Let $$\dom Q=\{f\in L^2(\R): \int_\R \abs{xf(x)}^2\,dx<\infty\}$$ and, for $g\in \dom Q,$ define $(Qg)(x)=xg(x)$ for a.e. $x\in \R$. It is shown in Example 7.1.5 of \cite{Simon} that $Q$ defines a self-adjoint operator. If a function $f$ is absolutely continuous, denote its almost-everywhere defined derivative by $f'.$ Now, let $$\dom P=\{f\in L^2(\R): f\text{ is absolutely continuous and } f'\in L^2(\R)\},$$ and for $h\in \dom P$, define $Ph:=ih'.$ It is shown in Theorem 6.30 of \cite{Weidmann} that $P$ defines a self-adjoint operator. Let $S(\R)$ denote the Schwartz space on $\R$, that is, $$S(\R)=\left\{f\in C^\infty(\R): \forall m,n\in \N,\; \norm{Q^mP^nf}_\infty<\infty\right\}.$$ Proposition X.6.5 of \cite{Conway} shows $S(\R)$ is dense in $L^2(\R)$, and it is clear from its definition that $S(\R)$ is contained in $\dom Q\cap \dom P$ and is invariant under both $Q$ and $P$. Hence, $S(\R)\subseteq \dom[P,Q].$ Furthermore, $[P,Q]g=ig$ for all $g\in S(\R)$. Therefore, $P$ and $Q$ satisfy the Heisenberg Commutation Relation. 
 \end{exmp}

If two operators are unitarily equivalent to a direct sum of copies of the Schr\"odinger pair, then they are certainly both unbounded. There are, however, examples of operators satisfying the Heisenberg Commutation Relation where one operator is bounded.

\begin{exmp}\label{bddunbdd} \normalfont For $f\in L^2[0,1],$ define $(Bf)(x)=xf(x)$ for a.e. $x\in [0,1].$ In contrast to its unbounded analogue $Q$, the operator $B$ is contractive. Let $AC[0,1]$ denote the set of functions which are absolutely continuous on $[0,1]$, and let $$\dom A=\{f\in AC[0,1]: f' \in L^2[0,1],\;f(0)=f(1)\}.$$ For $g\in \dom A,$ define $Ag=ig'.$ Example X.1.12 of \cite{Conway} shows  the operator $A$ with this particular domain is self-adjoint. Due to boundedness of $B$, $$\dom [A,B]=\{f\in \dom A: Bf\in \dom A\}.$$ Choose $$K:=\{f\in AC[0,1]: f'\in L^2[0,1],\;f(0)=f(1)=0\}.$$ Example X.1.11 of \cite{Conway} shows $K$ is dense in $L^2[0,1]$ as it contains all polynomials $p$ on $[0,1]$ satisfying $p(0)=p(1)=0$. Furthermore, we claim $K$ is invariant for $B$. Indeed, products of absolutely continuous functions are again absolutely continuous, so $(Bg)(x)=xg(x)$ for a.e. $x\in [0,1]$ defines an absolutely continuous function. The a.e.-defined derivative of $Bg$ is equivalent to $Bg'+g$ by the product rule. Moreover, $Bg'+g$ belongs to $L^2(\R)$ as $g'\in L^2(\R)$ and $B\in B(L^2[0,1]).$ Lastly, $$(Bg)(0)=0\cdot g(0)=0=1\cdot 0=1\cdot g(1)=(Bg)(1).$$ Thus, $BK\subseteq K$. As a result, $K\subseteq \dom [A,B]$. For $k\in K,$ observe $$[A,B]k=i\left(\frac{d}{dx}(Bk)-B(k')\right)=i(Bk'+k-Bk')=ik.$$ Therefore, $A$ and $B$ satisfy the Heisenberg Commutation Relation. 
\end{exmp} 

We claim the boundedness of the operators in Examples\autoref{Spair} and \autoref{bddunbdd} differs due the relative size of $\dom [P,Q]$ in $L^2(\R)$ versus $\dom [A,B]$ in $L^2[0,1].$ In particular, $\dom [A,B]$ does not contain a core for $A$ or $B$, while $\dom [P,Q]$ contains $S(\R)$, which is a core for both $P$ and $Q$. 

\begin{prop} Let $H$ be a Hilbert space and $D$ be a self-adjoint operator. If 
\begin{enumerate}[(i)]
\item$x\in B(H)$, 
\item $\dom [D,x]$ contains a core $\mathscr{X}$ for $D$, 
\item $[D,x]$ is bounded on $\mathscr{X}$, and 
\item the continuous extension $y$ of $[D,x]|_\mathscr{X}$ to all of $H$ belongs to $\mathscr{M}_D',$ 
\end{enumerate}
then $y=0.$ 
\end{prop}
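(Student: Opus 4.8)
The plan is to see that this proposition is essentially an immediate corollary of kernel stabilization (Theorem \autoref{thm:kerD}): hypotheses (i)--(iii) place $x$ in $\dom \delta^D_w$ with $\delta^D_w(x) = iy$, hypothesis (iv) upgrades this to $(\delta^D_w)^2(x) = 0$, and then Theorem \autoref{thm:kerD} collapses $\ker(\delta^D_w)^2$ onto $\ker\delta^D_w$, forcing $\delta^D_w(x) = 0$ and hence $y = 0$.

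First I would invoke Theorem \autoref{weaklyd}. Since a core $\mathscr{X}$ for $D$ is by definition a subspace of $\dom D$, the commutator $[iD,x] = i[D,x]$ is defined and bounded on the core $\mathscr{X}$ by (ii) and (iii); this is precisely condition (v) of Theorem \autoref{weaklyd}. Hence $x \in \dom \delta^D_w$, and $\delta^D_w(x)$ is the unique bounded operator on $H$ restricting to $i[D,x]$ on $\dom D$. On $\mathscr{X}$ we have $[D,x] = y$ by (iv), so $\delta^D_w(x)$ and $iy$ agree on the dense set $\mathscr{X}$; both being bounded, $\delta^D_w(x) = iy$ on all of $H$.

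Next I would use Proposition \autoref{prop:ker=M_D'}, which gives $\ker\delta^D_w = \mathscr{M}_D'$. Thus hypothesis (iv) says exactly that $y \in \ker\delta^D_w$, and since $\ker\delta^D_w$ is a linear subspace (indeed a von Neumann algebra, Proposition \autoref{vN}) it contains $iy = \delta^D_w(x)$. Therefore $(\delta^D_w)^2(x) = \delta^D_w(\delta^D_w(x)) = 0$, i.e. $x \in \ker(\delta^D_w)^2$. Applying Theorem \autoref{thm:kerD} with $n = 2$ yields $x \in \ker\delta^D_w$, so $\delta^D_w(x) = 0$, whence $iy = 0$ and $y = 0$.

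There is no genuine obstacle in this argument; it is a bookkeeping matter. The only points requiring a little care are keeping track of the scalar $i$ relating $[D,x]$ and $[iD,x]$ (so that the extension $y$ of $[D,x]$ corresponds to $\delta^D_w(x) = iy$ rather than $\delta^D_w(x) = y$), and noting that a core for $D$ lies inside $\dom D$ so that the identification of $\delta^D_w(x)$ with $iy$ on the dense set $\mathscr{X}$ is legitimate. All the substantive content has already been done in Theorem \autoref{thm:kerD} and Proposition \autoref{prop:ker=M_D'}.
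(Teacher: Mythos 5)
Your proposal is correct and follows essentially the same route as the paper's own proof: invoke Theorem\autoref{weaklyd}(v) to get $x\in\dom\delta^D_w$ with $\delta^D_w(x)=iy$, use Proposition\autoref{prop:ker=M_D'} to place $iy$ in $\ker\delta^D_w$ so that $x\in\ker(\delta^D_w)^2$, and then apply Theorem\autoref{thm:kerD} to conclude $y=0$. Your extra care with the scalar $i$ and the density of the core is a welcome clarification but does not change the argument.
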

\begin{proof} Let $x\in B(H)$ and suppose $\dom [D,x]$ contains a core $\mathscr{X}$ for $D$. Further, suppose for all $h\in \mathscr{X},$ $$[D,x]h=yh$$ for some $y\in \mathscr{M}_D'.$ As $[D,x]$ is equal to a bounded operator on its domain of definition, which contains a core for $D$, Theorem\autoref{weaklyd} implies $x$ is weakly $D$-differentiable with $\delta^D_w(x)=iy.$ By Proposition\autoref{prop:ker=M_D'}, $iy\in \mathscr{M}_D'$ implies $iy\in \ker \delta^D_w.$ Thus, $x\in \ker (\delta^D_w)^2$ as $(\delta^D_w)^2(x)=\delta^D_w(iy)=0$. By Theorem\autoref{thm:kerD}, $x\in \ker(\delta^D_w)^2=\ker \delta^D_w$, so $iy=\delta^D_w(x)=0.$ Therefore, $y=0.$
\end{proof}

In particular, there is no $x\in B(H)$ so that $\dom [D,x]$ contains a core $\mathscr{X}$ for $D$ and $[D,x]h=ih$ for all $h\in \mathscr{X}$, i.e., no bounded operator can satisfy the Heisenberg Commutation Relation with a self-adjoint operator $D$ on a core for $D$.

\bibliography{References}
\bibliographystyle{hsiam}

\end{document}